\def\mathcenterto#1#2{\mathclap{\phantom{#1}\mathclap{#2}}\phantom{#1}}
\let\old@widetilde\widetilde
\def\widetildeto#1#2{\mathcenterto{#2}{\old@widetilde{\mathcenterto{#1}{#2\,}}}}
\let\old@widehat\widehat
\def\widehatto#1#2{\mathcenterto{#2}{\old@widehat{\mathcenterto{#1}{#2\,}}}}
\let\old@overline\overline
\def\overlineto#1#2{\mathcenterto{#2}{\old@overline{\mathcenterto{#1}{#2\,}}}}
\DeclareMathOperator*{\trace}{\mathrm{trace}}
\def\mm{\mathsf{MM-PGO}}
\def\amm{\mathsf{AMM-PGO}}
\def\nag{\mathsf{AMM-Chordal}}
\def\dgs{\mathsf{DGS}}
\def\0{\boldsymbol{0}}
\def\b{\boldsymbol{b}}
\def\NN{\mathcal{N}}
\def\I{\mathbf{I}}
\def\transpose{\top}
\def\R{\mathbb{R}}
\def\b0{\mathbf{0}}
\def\AA{\mathcal{L}}
\def\AA{\mathcal{A}}
\def\XX{\mathcal{X}}
\def\RR{\mathcal{R}}
\def\TT{\mathcal{T}}
\def\se3{\mathfrak{se}(3)}
\def\diag{\mathrm{diag}}
\def\nR{{\widetilde{R}}}
\def\nt{\tilde{t}}
\def\nM{\widetildeto{H}{M}}
\def\nH{{\widetilde{\mathrm{\Omega}}}}
\def\nGamma{\widetilde{\mathrm{\Gamma}}}
\def\nR{{\widetilde{R}}}
\def\nt{\tilde{t}}
\def\QQ{\mathcal{Q}}
\def\EE{\mathcal{E}}
\def\aEE{\overrightarrow{\EE}}
\long\def\answer#1{}
\long\def\comment#1{}
\def\grad{\mathrm{grad}}
\newcommand{\innprod}[2]{\langle {#1},{#2}\rangle}
\theoremstyle{definition}
\newtheorem{prop}{Proposition}
\newtheorem{assumption}{Assumption}
\crefname{prop}{Proposition}{Propositions}
\crefname{assumption}{Assumption}{Assumptions}
\theoremstyle{remark}
\newtheorem*{remark*}{{Remark}}
\newcolumntype{P}[1]{>{\centering\arraybackslash}p{#1}}
\newcolumntype{M}[1]{>{\centering\arraybackslash}m{#1}}
\newcommand{\RNum}[1]{\uppercase\expandafter{\romannumeral #1\relax}}
\title{\LARGE \bf  Majorization Minimization Methods for Distributed Pose Graph Optimization with Convergence Guarantees}
\author{Taosha Fan \and Todd Murphey
\thanks{Taosha Fan and Todd Murphey are with the Department of Mechanical Engineering, Northwestern University, Evanston, IL 60201, USA. E-mail: {\tt taosha.fan@u.northwestern.edu, t-murphey@northwestern.edu}\\\indent This material is based upon work supported by the National Science Foundation under award DCSD-1662233.}
    }
\begin{document}
\maketitle
\thispagestyle{empty}
\pagestyle{empty}

\begin{abstract}
	In this paper, we consider the problem of distributed pose graph optimization (PGO) that has extensive applications in multi-robot simultaneous localization and mapping (SLAM). We propose majorization minimization methods for distributed PGO and show that our proposed methods are guaranteed to converge to first-order critical points under mild conditions. Furthermore, since our proposed methods rely a proximal operator of distributed PGO, the convergence rate can be significantly accelerated with Nesterov's method, and more importantly, the acceleration induces no compromise of theoretical guarantees. In addition, we also present accelerated majorization minimization methods for the distributed chordal initialization that have a quadratic convergence, which can be used to compute an initial guess for distributed PGO. The efficacy of this work is validated through applications on a number of 2D and 3D SLAM datasets and comparisons with existing state-of-the-art methods, which indicates that our proposed methods have faster convergence and result in better solutions to distributed PGO. 
\end{abstract}

\section{Introduction} \label{section::introduction}
In multi-robot simultaneous localization and mapping (SLAM) \cite{cunningham2010ddf,dong2015distributed,cieslewski2018data}, each robot needs to estimate its own poses from noisy relative pose measurements under limited communication with the other robots, and such a problem can be formulated as distributed pose graph optimization (PGO), in which each robot can be represented as a node and two nodes (robots) are said to be neighbors if there exists a noisy relative pose measurement between them. In most cases, it is assumed that each node can only communicate with its neighbors, which suggests that distributed PGO is much more challenging than non-distributed PGO. Even though there exists numerous methods for PGO \cite{rosen2016se,dellaert2012factor,carlone2016planar,carlone2015lagrangian,kuemmerle11icra,grisetti2009nonlinear,fan2019cpl}, most, if not all, of them are difficult to be distributed. 

In the last decade, multi-robot SLAM has been becoming increasingly important, which further promotes the research of distributed PGO \cite{choudhary2017distributed,choudhary2015exactly,tron2014distributed}. A distributed linear system solver is implemented in \cite{choudhary2017distributed} to evaluate the chordal initialization \cite{carlone2015initialization} and the Gauss-Newton direction for distributed PGO. A heuristic extension of the alternating direction method of multipliers (ADMM) for distributed PGO is proposed in \cite{choudhary2015exactly}. A multi-stage first-order method for distributed PGO using the Riemannian gradient is presented in \cite{tron2014distributed}. However, \cite{choudhary2017distributed,choudhary2015exactly} have no guarantees to converge to critical points, whereas \cite{tron2014distributed} needs strict presumptions that might fail to hold in practice.

In this paper, we propose majorization minimization methods \cite{hunter2004tutorial} for distributed PGO that extend our previous work \cite{fan2019proximal}, in which proximal methods for PGO are proposed that converge to first-order critical points for both centralized and distributed PGO. In \cite{fan2019proximal}, each pose is represented as a single node and updated independently. Even though proximal methods for PGO in \cite{fan2019proximal} converge fast for centralized PGO and apply to any distributed PGO, it might have slow convergence for multi-robot SLAM, in which each robot usually has more than one poses and it is more reasonable to represent poses of the same robot rather than each individual pose as a node. In this paper, poses of the same robot are represented as the same node and are updated as a whole, which makes use of more information in optimization and is expected to converge faster than \cite{fan2019proximal} for distributed PGO in multi-robot SLAM. Furthermore, we redesign the accelerated algorithm using Nesterov's method for distributed PGO such that the inefficient objective evaluation is avoided and the inter-node communication is significantly reduced.

In contrast to existing state-of-the-art methods \cite{choudhary2017distributed,choudhary2015exactly,tron2014distributed}, our proposed methods minimize upper bounds of PGO by solving independent optimization subproblems and are guaranteed to converge to first-order critical points. Furthermore, since our proposed methods rely on proximal operators of PGO, Nesterov's method \cite{nesterov1983method,nesterov2013introductory} can be used for acceleration, and the acceleration induces limited extra computation, and more importantly, no compromise of theoretical guarantees. In addition, we also propose accelerated majorization minimization methods for the distributed chordal initialization that have a quadratic convergence to compute an initial guess for distributed PGO.

The rest of this paper is organized as follows: \cref{section::notation} introduces notation used in this paper. \cref{section::problem} formulates the problem of distributed PGO. \cref{section::major} presents an upper bound that majorizes distributed PGO. \cref{section::mm} and \cref{section::amm} present and accelerate majorization minimization methods for distributed PGO, respectively. \cref{section::chordal} presents accelerated majorization minimization methods for the distributed chordal initialization. \cref{section::results} implements our proposed methods for distributed PGO in multi-robot SLAM and makes comparisons with existing state-of-the-art method \cite{choudhary2017distributed}. \cref{section::conclusion}  concludes the paper.

\section{Notation}\label{section::notation}
$\R$ denotes real numbers; $\R^{m\times n}$ and $\R^n$ denote $m\times n$ matrices and $n\times 1$ vectors, respectively; and $SO(d)$ and $SE(d)$ denote special orthogonal groups and special Euclidean groups, respectively. For a matrix $X\in \R^{m\times n}$, the notation $[X]_{ij}$ denotes its $(i,\,j)$-th entry or $(i,\,j)$-th block. The notation $\|\cdot\|$ denotes the Frobenius norm of matrices and vectors. For symmetric matrices $Y,\, Z\in \R^{n\times n}$, $Y\succeq Z$ (or $Z\preceq Y$) and $Y\succ Z$ (or $Z\prec Y$) indicate that $Y-Z$ is positive (or negative) semidefinite and definite, respectively. If $F:\R^{m\times n}\rightarrow\R $ is a function, $\mathcal{M}\subset \R^{m\times n}$ is a Riemannian manifold and $X\in \mathcal{M}$, the notation $\nabla F(X)$ and $\mathrm{grad}\, F(X)$ denote the Euclidean and Riemannian gradients, respectively. 

\section{Problem Formulation}\label{section::problem}
Distributed PGO considers the problem of estimating unknown poses $g_{1}^\alpha$, $g_{2}^\alpha$, $\cdots$, $g_{n_\alpha}^\alpha\in SE(d)$ of each node $\alpha\in \AA\triangleq\{1,\,2,\,\cdots,\, A\}$, in which $g_{(\cdot)}^\alpha=(t_{(\cdot)}^\alpha,\,R_{(\cdot)}^\alpha)$ with $t_{(\cdot)}^\alpha\in \R^d$ and $R_{(\cdot)}^\alpha\in SO(d)$ and $n_\alpha$ is the number of poses in node $\alpha$, given intra-node noisy measurements $\tilde{g}_{ij}^{\alpha\alpha}\in SE(d)$ of  
$$g_{ij}^{\alpha\alpha}\triangleq \left({g_i^\alpha}\right)^{-1} g_j^\alpha\in SE(d)$$
within a single node $\alpha$ and inter-node noisy measurements $\tilde{g}_{ij}^{\alpha\beta}\in SE(d)$ of  
$$g_{ij}^{\alpha\beta}\triangleq \left(g_i^\alpha\right)^{-1} g_j^\beta\in SE(d)$$
between different nodes $\alpha\neq\beta$. 

For notational simplicity, we rewrite poses $g_{1}^\alpha$, $g_{2}^\alpha$, $\cdots$, $g_{n_\alpha}^\alpha\in SE(d)$ of node $\alpha$ as 
\begin{equation}
X^\alpha\triangleq \begin{bmatrix}
t^\alpha & R^\alpha
\end{bmatrix}\in \XX^\alpha \subset \R^{d\times (d+1)n_\alpha}
\end{equation}
in which $\vphantom{\Big\{}t^\alpha\triangleq\begin{bmatrix}
t_1^\alpha & \cdots & t_{n_\alpha}^\alpha
\end{bmatrix}\in \R^{d\times n_\alpha}$, $R^\alpha\triangleq\begin{bmatrix}
R_1^\alpha & \cdots & R_{n_\alpha}^\alpha
\end{bmatrix}\in SO(d)^{n_\alpha}\subset \R^{d\times dn_\alpha}\vphantom{\Big\{}$, and
\begin{equation}
\nonumber
\XX^\alpha\triangleq\R^{d\times n_\alpha}\times SO(d)^{n_\alpha}.
\end{equation}
In addition, we define
$
\XX \triangleq  \XX^1\times\cdots\times \XX^A \subset \R^{d\times (d+1)n}
$
in which $n=\sum\limits_{\alpha\in \AA}n_\alpha$, and $\aEE^{\alpha\beta}$ such that  $(i,\,j)\in\aEE^{\alpha\beta}$ if and only if there exists a noisy measurement $\tilde{g}_{ij}^{\alpha\beta}\in SE(d)$, and $\NN_-^\alpha$ (respectively, $\NN_+^\alpha$) such that a node $\beta\in \NN_-^\alpha$ (respectively, $\beta\in\NN_+^\alpha$) if and only if $\aEE^{\alpha\beta}\neq\emptyset$ (respectively, $\aEE^{\beta\alpha}\neq \emptyset$) and $\beta\neq\alpha$.

Following \cite{rosen2016se}, it is possible to formulate distributed PGO as maximum likelihood estimation
\begin{equation}\label{eq::pgo}
\min_{X\in \XX} F(X).
\end{equation}
in which 
$X\triangleq\begin{bmatrix}
X^1 & \cdots & X^{A}
\end{bmatrix}\in \XX.$
In \cref{eq::pgo}, the objective function $F(X)$ is defined to be
\begin{multline}\label{eq::obj}
F(X)\triangleq \sum_{\alpha\in\AA}\sum_{(i,j)\in \aEE^{\alpha\alpha}}\frac{1}{2}\Big[\kappa_{ij}^{\alpha\alpha}\|R_i^\alpha \nR_{ij}^{\alpha\alpha} -R_j^\alpha\|^2 +\\ \tau_{ij}^{\alpha\alpha}\|R_i^\alpha \nt_{ij}^{\alpha\alpha}+t_i^\alpha - t_j^\alpha\|^2\Big]+\\
\sum_{\substack{\alpha,\beta\in\AA,\\
\alpha\neq \beta}}\sum_{(i,j)\in \aEE^{\alpha\beta}}\frac{1}{2}\Big[\kappa_{ij}^{\alpha\beta}\|R_i^\alpha \nR_{ij}^{\alpha\beta} -R_j^\beta\|^2 +\\ 
\tau_{ij}^{\alpha\beta}\|R_i^\alpha \nt_{ij}^{\alpha\beta}+t_i^\alpha - t_j^\beta\|^2\Big],
\end{multline}
in which $\kappa_{ij}^{\alpha\alpha}$, $\kappa_{ij}^{\alpha\beta}$, $\tau_{ij}^{\alpha\alpha}$ and $\tau_{ij}^{\alpha\beta}$ are weights that are related with measurement noise \cite{rosen2016se}. Furthermore, it is straightforward to show that there exists a positive-semidefinite data matrix $\nM\in \R^{(d+1)n\times(d+1)n}$ such that \cref{eq::obj} is equivalent to \cite{rosen2016se}
\begin{equation}\label{eq::objM}
F(X)\triangleq \frac{1}{2}\trace(X\nM X^\transpose).
\end{equation}

In the following sections, we will present majorization minimization methods to solve distributed PGO of \cref{eq::pgo,eq::objM}, which is the major contribution of this paper.
\section{The Majorization of Distributed PGO}\label{section::major}
In this section, following a similar procedure to \cite{fan2019proximal}, we will propose a function $E(X|X^{(k)})$ that is a proximal operator majorizing $F(X)$ in \cref{eq::obj,eq::objM}, and such a proximal operator is critical to our proposed majorization minimization methods for distributed PGO.

For any matrices $B,\,C\in \R^{m\times n}$, it is known that 
\begin{equation}\label{eq::inequality}
\frac{1}{2}\|B-C\|^2 \leq \|B-P\|^2 + \|C-P\|^2
\end{equation}
for any $P\in \R^{m\times n}$, in which ``$=$'' holds if
$$P=\frac{1}{2}B+\frac{1}{2}C. $$
If we assume that $X^{(k)}=\begin{bmatrix}
X^{1(k)} & \cdots & X^{A(k)}
\end{bmatrix}\in \XX$ with $X^{\alpha(k)}\in \XX^\alpha$ is the current iterate of \cref{eq::pgo}, then implementing \cref{eq::inequality} on each inter-robot measurement cost
$$\frac{1}{2}\kappa_{ij}^{\alpha\beta}\|R_i^\alpha \nR_{ij}^{\alpha\beta} -R_j^\beta\|^2 +\\ 
\frac{1}{2}\tau_{ij}^{\alpha\beta}\|R_i^\alpha \nt_{ij}^{\alpha\beta}+t_i^\alpha - t_j^\beta\|^2$$
between node $\alpha$ and $\beta$ ($\alpha\neq\beta$) in \cref{eq::obj}, we obtain an upper bound of \cref{eq::obj} as
\begin{multline}\label{eq::E}
E(X|X^{(k)})\triangleq \sum_{\alpha\in \AA}\sum_{(i,j)\in \aEE^{\alpha\alpha}}\frac{1}{2}\Big[\kappa_{ij}^{\alpha\alpha}\|R_i^\alpha \nR_{ij}^{\alpha\alpha} -R_j^\alpha\|^2 +\\ \tau_{ij}^{\alpha\alpha}\|R_i^\alpha \nt_{ij}^{\alpha\alpha}+t_i^\alpha - t_j^\alpha\|^2\Big]+\\
\sum_{\substack{\alpha,\beta\in \AA,\\\alpha\neq \beta}}\sum_{(i,j)\in \aEE^{\alpha\beta}}\Big[\kappa_{ij}^{\alpha\beta}\|R_i^\alpha \nR_{ij}^{\alpha\beta} -P_{ij}^{\alpha\beta(k)}\|^2 +\\ 
\tau_{ij}^{\alpha\beta}\|R_i^\alpha \nt_{ij}^{\alpha\beta}+t_i^\alpha - p_{ij}^{\alpha\beta(k)}\|^2+\\
\kappa_{ij}^{\alpha\beta}\|R_j^\beta -P_{ij}^{\alpha\beta(k)}\|^2+
\tau_{ij}^{\alpha\beta}\|t_j^\beta - p_{ij}^{\alpha\beta(k)}\|^2\Big],
\end{multline}   
in which
\begin{equation}\label{eq::P}
P^{\alpha\beta(k)}_{ij}=\frac{1}{2}R_i^{\alpha(k)}\nR_{ij}^{\alpha\beta}+\frac{1}{2}R_j^{\beta(k)}
\end{equation}
and
\begin{equation}
p^{\alpha\beta(k)}_{ij}=\frac{1}{2}R_i^{\alpha(k)}\nt_{ij}^{\alpha\beta}+\frac{1}{2}t_j^{\beta(k)}.
\end{equation}
As a matter of fact, we might decompose $E(X|X^{(k)})$ into
\begin{equation}\label{eq::Easum}
E(X|X^{(k)}) = \sum_{\alpha\in \AA} E^\alpha(X^\alpha|X^{(k)}),
\end{equation}
in which each $E^\alpha(X^\alpha|X^{(k)})$ only depends on $X^{\alpha}$ in a single node $\alpha$
\begin{multline}\label{eq::Ea}
E^\alpha(X^\alpha|X^{(k)})\triangleq \sum_{(i,j)\in \aEE^{\alpha\alpha}}\frac{1}{2}\Big[\kappa_{ij}^{\alpha\alpha}\|R_i^\alpha \nR_{ij}^{\alpha\alpha} -R_j^\alpha\|^2 +\\ \tau_{ij}^{\alpha\alpha}\|R_i^\alpha \nt_{ij}^{\alpha\alpha}+t_i^\alpha - t_j^\alpha\|^2\Big]+\\
\sum_{\substack{\beta\in \NN_-^\alpha}}\sum_{(i,j)\in \aEE^{\alpha\beta}}\Big[\kappa_{ij}^{\alpha\beta}\|R_i^\alpha \nR_{ij}^{\alpha\beta} -P_{ij}^{\alpha\beta(k)}\|^2 +\\ 
\tau_{ij}^{\alpha\beta}\|R_i^\alpha \nt_{ij}^{\alpha\beta}+t_i^\alpha - p_{ij}^{\alpha\beta(k)}\|^2\Big]+\\
+\sum_{\substack{\beta\in \NN_+^\alpha}}\sum_{(j,i)\in \aEE^{\beta\alpha}}\Big[\kappa_{ji}^{\beta\alpha}\|R_i^\alpha -P_{ji}^{\beta\alpha(k)}\|^2+
\tau_{ji}^{\beta\alpha}\|t_i^\alpha - p_{ji}^{\beta\alpha(k)}\|^2\Big].
\end{multline}
Moreover, $E(X|X^{(k)})$ is a proximal operator of $F(X)$ as the following proposition states.

\begin{prop}\label{prop::G}
For all $\alpha\in\AA$, there exists constant positive-semidefinite matrices $\nH^\alpha\in \R^{(d+1)n_\alpha\times (d+1)n_\alpha}$ such that 
$E(X|X^{(k)})$ is equivalent to 
\begin{multline}\label{eq::EM}
E(X|X^{(k)}) \triangleq \frac{1}{2}\innprod{\nH(X-X^{(k)})}{X-X^{(k)}}+\\
\innprod{\nabla F(X^{(k)})}{{X-X^{(k)}}}+ F(X^{(k)}),
\end{multline}
in which $\nH \in \R^{(d+1)n\times (d+1)n}$ is a block diagonal matrix
\begin{equation}
\nonumber
\nH\triangleq\mathrm{diag}\big\{\nH^1,\,\cdots,\,\nH^A\big\}\in \R^{(d+1)n\times (d+1)n},
\end{equation}
and $\nabla F(X^{(k)}) \triangleq X^{(k)}\nM\in \R^{d\times(d+1)n}$ is the Euclidean gradient of $F(X)$ at $X^{(k)}\in\XX$.
Furthermore, $\nH\succeq \nM$ and $E(X|X^{(k)})\geq F(X^{(k)})$ in which ``$=$'' holds if  $X=X^{(k)}$.
\end{prop}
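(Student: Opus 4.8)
The plan is to establish \cref{prop::G} in three stages: first rewrite the quadratic upper bound \cref{eq::E} in the ``completed-square'' form \cref{eq::EM}, then identify the matrix $\nH$ and verify its block-diagonal structure, and finally prove the two ordering relations $\nH\succeq\nM$ and $E(X|X^{(k)})\ge F(X^{(k)})$.

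\textbf{Step 1: from \cref{eq::E} to a quadratic-in-$(X-X^{(k)})$ form.} First I would observe that $E(X|X^{(k)})$ in \cref{eq::E}, like $F(X)$ in \cref{eq::objM}, is a quadratic function of the entries of $X$. Expanding the squared norms and collecting terms, $E(X|X^{(k)})$ can be written as $\frac{1}{2}\trace(X\nH X^\transpose)+\innprod{L^{(k)}}{X}+c^{(k)}$ for some symmetric positive-semidefinite $\nH$, some linear term $L^{(k)}$, and some constant $c^{(k)}$ depending on $X^{(k)}$. The key structural fact is that the only terms in \cref{eq::E} coupling distinct nodes $\alpha\ne\beta$ are the intra-node costs on $\aEE^{\alpha\alpha}$ (which stay within one node) and the split inter-node costs, which, after the substitution \cref{eq::P}, contain no product $R_i^\alpha R_j^\beta$ — each such term involves only one node's variables together with the constant $P_{ij}^{\alpha\beta(k)}$ or $p_{ij}^{\alpha\beta(k)}$. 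Hence the Hessian $\nH$ of $E$ has no cross-node blocks, which is exactly the decomposition \cref{eq::Easum}--\cref{eq::Ea}: set $\nH^\alpha$ to be the Hessian of $E^\alpha(\cdot|X^{(k)})$, so $\nH=\diag\{\nH^1,\dots,\nH^A\}$, and each $\nH^\alpha$ is constant (independent of $X^{(k)}$) because the dependence on $X^{(k)}$ sits entirely in the constants $P_{ij}^{\alpha\beta(k)}$, $p_{ij}^{\alpha\beta(k)}$, which affect only $L^{(k)}$ and $c^{(k)}$. Positive-semidefiniteness of $\nH^\alpha$ is immediate since $E^\alpha$ is a sum of squared norms.

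\textbf{Step 2: matching the linear and constant terms via the tangency property.} Rather than computing $L^{(k)}$ and $c^{(k)}$ head-on, I would use the fact, already noted below \cref{eq::inequality}, that applying \cref{eq::inequality} with $P$ equal to the midpoint — which is precisely the choice \cref{eq::P} — makes each inter-node majorizing term equal to the original cost \emph{at} $X=X^{(k)}$. Therefore $E(X^{(k)}|X^{(k)})=F(X^{(k)})$. Moreover, since $E(X|X^{(k)})\ge F(X)$ pointwise (that is how \cref{eq::E} was derived) and they agree at $X^{(k)}$, the smooth function $E(\cdot|X^{(k)})-F(\cdot)$ has a global minimum $0$ at $X^{(k)}$, so its Euclidean gradient vanishes there: $\nabla_X E(X^{(k)}|X^{(k)})=\nabla F(X^{(k)})=X^{(k)}\nM$. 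A quadratic function is determined by its value, gradient, and Hessian at any one point; evaluating the generic form $\frac12\trace(X\nH X^\transpose)+\innprod{L^{(k)}}{X}+c^{(k)}$ at $X^{(k)}$ and matching these three data forces exactly \cref{eq::EM}. This also gives $\nabla F(X^{(k)})=X^{(k)}\nM$ as claimed, and $E(X|X^{(k)})\ge F(X^{(k)})$ with equality at $X=X^{(k)}$ follows from the completed-square form together with $\nH\succeq 0$ — wait, more carefully, it follows once we know $\nH\succeq\nM$, handled next, since then $E(X|X^{(k)})-F(X)=\frac12\innprod{(\nH-\nM)(X-X^{(k)})}{X-X^{(k)}}\ge 0$ and in particular $E\ge F(X)\ge$ the quadratic lower bound; the simplest route to $E\ge F(X^{(k)})$ at the critical point is just $\nH\succeq 0$.

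\textbf{Step 3: the ordering $\nH\succeq\nM$.} This is the main obstacle and the only place where real work is needed. The cleanest argument is to compare $E$ and $F$ directly: by construction $E(X|X^{(k)})\ge F(X)$ for all $X\in\R^{d\times(d+1)n}$, since \cref{eq::E} was obtained from \cref{eq::obj} by replacing each inter-node cost with the larger quantity guaranteed by \cref{eq::inequality} (the inequality $\frac12\|B-C\|^2\le\|B-P\|^2+\|C-P\|^2$ holds for all $P$, in particular the data-dependent choice \cref{eq::P}). Subtracting \cref{eq::objM} from \cref{eq::EM} gives
\begin{equation}
\nonumber
E(X|X^{(k)})-F(X)=\tfrac12\innprod{(\nH-\nM)(X-X^{(k)})}{X-X^{(k)}}\ge 0
\end{equation}
for every $X$, and since $X-X^{(k)}$ ranges over all of $\R^{d\times(d+1)n}$ as $X$ does, this is precisely the statement $\nH-\nM\succeq 0$. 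One subtlety to check is that the inequality $E\ge F$ should be verified as a genuine pointwise inequality on the whole ambient space $\R^{d\times(d+1)n}$, not merely on the manifold $\XX$, because the semidefinite comparison is an ambient-space statement; but \cref{eq::inequality} is an unconstrained matrix inequality, so term-by-term replacement indeed yields $E(X|X^{(k)})\ge F(X)$ for all $X\in\R^{d\times(d+1)n}$, and we are done. Finally, $E(X|X^{(k)})\ge F(X^{(k)})$ with equality iff $X=X^{(k)}$ follows by restricting the completed-square identity \cref{eq::EM}: the cross and constant terms vanish at $X=X^{(k)}$, and $\innprod{\nH(X-X^{(k)})}{X-X^{(k)}}\ge 0$ with the stated equality case whenever $\nH$ is (as will follow from its explicit block structure) positive definite on the relevant subspace — or, if $\nH$ is only semidefinite, the equality claim should be read as ``$X=X^{(k)}$ implies equality,'' which is immediate.
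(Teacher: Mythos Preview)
Your approach is essentially the paper's: both recognize $E(\cdot|X^{(k)})$ as quadratic, take $\nH$ to be its (constant, block-diagonal) Euclidean Hessian, match value and gradient at $X^{(k)}$ to obtain \cref{eq::EM}, and then deduce $\nH\succeq\nM$ from the ambient-space inequality $E(X|X^{(k)})\ge F(X)$ by comparing the two second-order expansions about $X^{(k)}$. The one real difference is how the gradient match is obtained: the paper expands each split term explicitly and reads off $\nabla E(X^{(k)}|X^{(k)})=\nabla F(X^{(k)})$ term by term, whereas your Step~2 gets it in one stroke from first-order optimality at the global minimizer $X^{(k)}$ of the nonnegative function $E(\cdot|X^{(k)})-F(\cdot)$. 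Your route is cleaner; the paper's has the side benefit of yielding closed-form expressions for the blocks $\nH^\alpha$.

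Your final paragraph, however, tangles itself up trying to prove the literal inequality ``$E(X|X^{(k)})\ge F(X^{(k)})$'' from the proposition statement, and the arguments you try (via $\nH\succeq 0$, or via ``the cross and constant terms vanish'') do not work, because the linear term $\innprod{\nabla F(X^{(k)})}{X-X^{(k)}}$ in \cref{eq::EM} can be arbitrarily negative. In fact that inequality is false in general: if there are no inter-node edges then $E\equiv F$, and $F$ is typically not minimized at $X^{(k)}$. This is almost certainly a typo in the proposition --- the paper's own proof actually concludes with ``$E(X|X^{(k)})\ge F(X)$,'' and that pointwise majorization is what is used downstream (see \cref{eq::inG}). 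Your Step~3 already establishes exactly this correct version, so simply drop the attempts to rescue the statement as printed.
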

\begin{proof}
See \cite[Appendix A]{fan2020mm_full}.
\end{proof}

In the next section, we will present majorization minimization methods for distributed PGO using $E(X|X^{(k)})$ in \cref{eq::E,eq::EM}.

\section{The Majorization Minimization Method for Distributed PGO}\label{section::mm}
From \cref{prop::G}, it is known $\nH\succeq \nM$, and then it can be shown that if $\xi \in\R$ and $\xi \geq 0$, there exists a block diagonal matrix $\nGamma\triangleq\nH + \xi\cdot\I\in \R^{(d+1)n\times (d+1)n}$ such that
\begin{equation}\label{eq::Gamma}
\nGamma\triangleq \diag\big\{\nGamma^1,\,\cdots,\, \nGamma^A\big\} \succeq \nM ,
\end{equation}
in which
\begin{equation}\label{eq::Gammaxi}
\nGamma^\alpha\triangleq \nH^\alpha +\xi\cdot\I^\alpha\in \R^{(d+1)n_\alpha \times (d+1)n_\alpha}
\end{equation} 
and $\I^\alpha\in \R^{(d+1)n_\alpha \times (d+1)n_\alpha}$ is the identity matrix. Then, we obtain
\begin{multline}\label{eq::G}
G(X|X^{(k)}) \triangleq \frac{1}{2}\innprod{\nGamma(X-X^{(k)})}{X-X^{(k)}}+\\
\innprod{\nabla F(X^{(k)})}{{X-X^{(k)}}}+ F(X^{(k)})
\end{multline}
such that $G(X|X^{(k)})\geq F(X)$ in which ``$=$'' holds if  $X=X^{(k)}$. More importantly, if there exists $X^{(k+1)}$ with 
\begin{equation}\label{eq::major}
G(X^{(k+1)}|X^{(k)})\leq G(X^{(k)}|X^{(k)}),
\end{equation}
then it can be concluded that 
$$F(X^{(k+1)})\leq G(X^{(k+1)}|X^{(k)})\leq G(X^{(k)}|X^{(k)}) =F(X^{(k)}).$$
In general, such a $X^{(k+1)}$ satisfying \cref{eq::major} can be found by solving 
\begin{equation}\label{eq::optG}
\min_{X\in \XX} G(X|X^{(k)}).
\end{equation}
Furthermore, since $\nGamma$ is a block diagonal matrix and
\begin{equation}\label{eq::nabF}
\nabla F(X^{(k)})\triangleq\begin{bmatrix}
\nabla_1 F(X^{(k)}) & \cdots & \nabla_A F(X^{(k)})
\end{bmatrix},
\end{equation}
in which $\nabla_\alpha F(X)\in \R^{d\times (d+1)n_\alpha}$ is the Euclidean gradient of $F(X)$ with respect to $X^{\alpha}\in \XX^\alpha$,  we might decompose $G(X|X^{(k)})$ into
\begin{equation}\label{eq::Gsub}
G(X|X^{(k)})= \sum_{X^\alpha\in \XX^\alpha} G^\alpha(X^{\alpha}|X^{(k)}),
\end{equation}
such that each $G^\alpha(X^{\alpha}|X^{(k)})$ is only related with $X^{\alpha}$ and $X^{\alpha(k)}\in\XX^\alpha$
\begin{multline}\label{eq::Ga}
G^\alpha(X^{\alpha}|X^{(k)})\triangleq \frac{1}{2}\innprod{\nGamma^\alpha(X^\alpha-X^{\alpha(k)})}{X^\alpha-X^{\alpha(k)}}+\\
\innprod{\nabla_\alpha F(X^{(k)})}{{X^\alpha-X^{\alpha(k)}}}+\overline{G}^{\alpha(k)},
\end{multline}
in which
\begin{multline}\label{eq::Fa}
\overline{G}^{\alpha(k)}\triangleq \sum_{(i,j)\in \aEE^{\alpha\alpha}}\frac{1}{2}\Big[\kappa_{ij}^{\alpha\alpha}\|R_i^{\alpha(k)} \nR_{ij}^{\alpha\alpha} -R_j^{\alpha(k)}\|^2 +\\ \tau_{ij}^{\alpha\alpha}\|R_i^{\alpha(k)} \nt_{ij}^{\alpha\alpha}+t_i^{\alpha(k)} - t_j^{\alpha(k)}\|^2\Big]+\\
\sum_{\substack{\beta\in \AA,\\\alpha\neq \beta}}\sum_{(i,j)\in \aEE^{\alpha\beta}}\frac{1}{4}\Big[\kappa_{ij}^{\alpha\beta}\|R_i^{\alpha(k)} \nR_{ij}^{\alpha\beta} -R_{j}^{\beta(k)}\|^2 +\\
\tau_{ij}^{\alpha\beta}\|R_i^{\alpha(k)} \nt_{ij}^{\alpha\beta}+t_i^{\alpha(k)} - t_j^{\beta(k)}\|^2\Big]+\\
\sum_{\substack{\beta\in \AA,\\\alpha\neq \beta}}\sum_{(i,j)\in \aEE^{\beta\alpha}}\frac{1}{4}\Big[\kappa_{ij}^{\beta\alpha}\|R_i^{\beta(k)} \nR_{ij}^{\beta\alpha} -R_{j}^{\alpha(k)}\|^2 +\\
\tau_{ij}^{\beta\alpha}\|R_i^{\beta(k)} \nt_{ij}^{\beta\alpha}+t_i^{\beta(k)} - t_j^{\alpha(k)}\|^2\Big].
\end{multline}
As a result, it is straightforward to show that \cref{eq::optG} is equivalent to solving $A$ independent optimization subproblems of smaller size
\begin{equation}\label{eq::optGsub}
\min_{X^\alpha\in \XX^\alpha} G^\alpha(X^\alpha|X^{(k)}),\quad\quad \forall\alpha=1,\,\cdots,\,A.
\end{equation}
If $X^{\alpha(k+1)}$ is computed from \cref{eq::optGsub}, it can be concluded that $F(X^{(0)})$, $F(X^{(1)})$, $\cdots$ are non-increasing as long as $G^\alpha(X^{\alpha(k+1)}|X^{(k)})\leq G^\alpha(X^{\alpha(k)}|X^{(k)})$ for each node $\alpha$.

\begin{algorithm}[t]
	\caption{The $\mm$ Method}
	\label{algorithm::mm}
	\begin{algorithmic}[1]
		\State\textbf{Input}: An initial iterate $X^{(0)}\in \XX$.
		\State\textbf{Output}: A sequence of iterates $\{X^{(k)}\}$.\vspace{0.2em} 
		\vspace{0.25em}
		\State $\nabla F(X^{(0)})\leftarrow X^{(0)}\nM$
		\vspace{0.25em}
		\For{$k=0,\,1,\,2,\,\cdots$}
		\vspace{0.2em}
		\For{$\alpha= 1,\,\cdots,\, A$}  
		\vspace{0.2em}
		\State $X^{\alpha(k+1)}\leftarrow\arg\min\limits_{X^\alpha\in\XX^\alpha }G^\alpha(X^\alpha|X^{(k)})$
		\EndFor
		\vspace{0.2em}
		\State $X^{(k+1)}\leftarrow\begin{bmatrix}
		X^{1(k+1)} & \cdots & X^{A(k+1)}
		\end{bmatrix}$
		\State $\nabla F(X^{(k+1)})\leftarrow X^{(k+1)}\nM  $\label{line::grad_F}
		\EndFor
	\end{algorithmic}
\end{algorithm}

Following \cref{eq::optG,eq::optGsub}, we obtain the $\mm$ method for distributed PGO (\cref{algorithm::mm}). The $\mm$ method can be classified as the majorization minimization method \cite{hunter2004tutorial} that is widely used in machine learning, signal processing, applied mathematics, etc. The $\mm$ method can be distributed without introducing any extra computational workloads as long as each node $\alpha$ can communicate with its neighbor node $\beta\in \NN_-^\alpha\cup \NN_+^\alpha$, i.e., each pose $g_i^\alpha$ in node $\alpha$ only needs pose $g_j^{\beta}$ in node $\beta$ for which either $(i,\,j)\in\aEE^{\alpha\beta}$ or $(j,\,i)\in\aEE^{\beta\alpha}$ to evaluate $\nabla F(X^{(k)})$ in line~\ref{line::grad_F} of \cref{algorithm::mm}. Even though $\overline{G}^{\alpha(k)}$ is included in $G^\alpha(X^\alpha|X^{(k)})$, it does not have to be explicitly evaluated when we minimize $G^\alpha(X^\alpha|X^{(k)})$. Furthermore, as is shown below, if \cref{assumption::local} holds, we obtain \cref{prop::mm} that the $\mm$ method is guaranteed to converge to first-order critical points of distributed PGO under mild conditions.

\begin{assumption}\label{assumption::local}
	For $X^{\alpha(k+1)}\leftarrow\arg\min\limits_{X^\alpha\in\XX^\alpha }G^\alpha(X^\alpha|X^{(k)})$ in the $\mm$ method, it is assumed that 
	$$G^\alpha(X^{\alpha(k+1)}|X^{(k)})\leq G^\alpha(X^{\alpha(k)}|X^{(k)})$$
	and 
	$$\mathrm{grad}\,G^\alpha(X^{\alpha(k+1)}|X^{(k)}) =\mathbf{0}$$
	for all $\alpha=1,\,\cdots,\, A$.
\end{assumption}

\begin{prop}\label{prop::mm}
If \cref{assumption::local} holds, then for a sequence of iterates $\{X^{(k)}\}$ generated by \cref{algorithm::mm}, we obtain
\begin{enumerate}[(1)]
\item  $F(X^{(k)})$ is non-increasing;\label{prop::mm1}
\item  $F(X^{(k)})\rightarrow F^\infty$ as $k\rightarrow\infty$;\label{prop::mm2}
\item  $\|X^{(k+1)}-X^{(k)}\|\rightarrow 0$ as $k\rightarrow \infty$ if $\nGamma\succ\nM$;\label{prop::mm3}
\item  $\mathrm{grad}\, F(X^{(k)})\rightarrow \0$ as $k\rightarrow \infty$ if $\nGamma\succ\nM$;\label{prop::mm4}
\item  $\|X^{(k+1)}-X^{(k)}\|\rightarrow 0$ as $k\rightarrow \infty$ if $\xi >0$;\label{prop::mm5}
\item  $\mathrm{grad}\, F(X^{(k)})\rightarrow \0$ as $k\rightarrow \infty$ if $\xi > 0$.\label{prop::mm6}
\end{enumerate}
\end{prop}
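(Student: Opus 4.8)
The plan is to prove the six statements in the order listed, since each uses the earlier ones. For (1), I would combine the majorization property from \cref{prop::G} with the descent part of \cref{assumption::local}: using $G(X|X^{(k)})\ge F(X)$, $G(X^{(k)}|X^{(k)})=F(X^{(k)})$, and the decomposition $G(X|X^{(k)})=\sum_{\alpha\in\AA}G^\alpha(X^\alpha|X^{(k)})$,
$$F(X^{(k+1)})\le G(X^{(k+1)}|X^{(k)})=\sum_{\alpha\in\AA}G^\alpha(X^{\alpha(k+1)}|X^{(k)})\le\sum_{\alpha\in\AA}G^\alpha(X^{\alpha(k)}|X^{(k)})=F(X^{(k)}),$$
which is (1). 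Then (2) follows because $F$ is a sum of squared norms in \cref{eq::obj} (equivalently $\nM\succeq 0$ in \cref{eq::objM}), so it is bounded below, and a non-increasing sequence bounded below converges, say to $F^\infty$.

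The crux is a quantitative version of the descent. Since $F$ and $G(\,\cdot\,|X^{(k)})$ are both quadratics with the same value and the same Euclidean gradient at $X^{(k)}$, differing only in their (generalized) Hessians $\nM$ and $\nGamma$, one has the exact identity
$$G(X|X^{(k)})-F(X)=\tfrac12\innprod{(\nGamma-\nM)(X-X^{(k)})}{X-X^{(k)}}\qquad\text{for all }X.$$
Writing $F(X^{(k)})-F(X^{(k+1)})=\big[G(X^{(k)}|X^{(k)})-G(X^{(k+1)}|X^{(k)})\big]+\big[G(X^{(k+1)}|X^{(k)})-F(X^{(k+1)})\big]$, the first bracket is $\ge 0$ by \cref{assumption::local} summed over $\alpha$, and the second equals $\tfrac12\innprod{(\nGamma-\nM)(X^{(k+1)}-X^{(k)})}{X^{(k+1)}-X^{(k)}}$; hence
$$F(X^{(k)})-F(X^{(k+1)})\ \ge\ \tfrac12\innprod{(\nGamma-\nM)(X^{(k+1)}-X^{(k)})}{X^{(k+1)}-X^{(k)}}.$$
If $\nGamma\succ\nM$ the right-hand side is at least $\tfrac\lambda2\|X^{(k+1)}-X^{(k)}\|^2$ with $\lambda=\lambda_{\min}(\nGamma-\nM)>0$ a fixed constant; if $\xi>0$ then $\nGamma-\nM=(\nH-\nM)+\xi\I\succeq\xi\I$ by $\nH\succeq\nM$ from \cref{prop::G}, giving the same bound with $\lambda=\xi$. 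Summing over $k$ and using (2), $\sum_k\|X^{(k+1)}-X^{(k)}\|^2\le\tfrac2\lambda\big(F(X^{(0)})-F^\infty\big)<\infty$, so $\|X^{(k+1)}-X^{(k)}\|\to 0$, which proves (3) and (5).

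For (4) and (6) I would turn iterate displacement into gradient decay using the stationarity part of \cref{assumption::local}. Because $\XX=\XX^1\times\cdots\times\XX^A$ and $G^\alpha$ depends only on $X^\alpha$, the conditions $\grad G^\alpha(X^{\alpha(k+1)}|X^{(k)})=\0$ for all $\alpha$ are equivalent to $\grad G(X^{(k+1)}|X^{(k)})=\0$ on the product manifold. Since $\XX$ is an embedded submanifold of Euclidean space with the induced metric, the Riemannian gradient is the orthogonal projection $\Pi$ of the Euclidean gradient onto $T_{X^{(k+1)}}\XX$, and $\Pi$ is linear and nonexpansive; therefore
$$\grad F(X^{(k+1)})=\Pi\big(\nabla F(X^{(k+1)})\big)=\Pi\big(\nabla F(X^{(k+1)})-\nabla G(X^{(k+1)}|X^{(k)})\big).$$
By \cref{eq::G}, $\nabla F(X^{(k+1)})-\nabla G(X^{(k+1)}|X^{(k)})=\big(\nabla F(X^{(k+1)})-\nabla F(X^{(k)})\big)-\nGamma(X^{(k+1)}-X^{(k)})$, and since $\nabla F(X)=X\nM$ this has norm at most $(\|\nM\|+\|\nGamma\|)\,\|X^{(k+1)}-X^{(k)}\|$. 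Thus $\|\grad F(X^{(k+1)})\|\to 0$ by (3) (resp.\ (5)), and reindexing gives $\grad F(X^{(k)})\to\0$. The main obstacle is making the descent estimate both tight and curvature-free: \cref{assumption::local} only supplies descent plus first-order stationarity of the subproblem, not exact minimization, and the manifold constraint would naively contribute curvature (second fundamental form) terms — both pitfalls are avoided by absorbing everything into the exact $(\nGamma-\nM)$-quadratic for (1)--(3),(5) and, for the gradient bound, by exploiting linearity of the tangent-space projection in place of geodesic-distance estimates.
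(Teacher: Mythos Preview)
Your proof is correct and follows essentially the same route as the paper: the same majorization chain for (1)--(2), the same exact identity $G(X|X^{(k)})-F(X)=\tfrac12\innprod{(\nGamma-\nM)(X-X^{(k)})}{X-X^{(k)}}$ for (3) and (5), and the same ``Euclidean-gradient difference projected to the tangent space'' argument for (4) and (6). The only cosmetic difference is that the paper bounds the projection operator $\QQ_{X}$ via compactness of $SO(d)^{n}$, whereas you use directly that the orthogonal projection onto $T_{X^{(k+1)}}\XX$ is nonexpansive; both yield the same constant-times-$\|X^{(k+1)}-X^{(k)}\|$ bound.
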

\begin{proof}
See \cite[Appendix B]{fan2020mm_full}.
\end{proof}

It should be noted that the $\mm$ method differs from \cite{choudhary2017distributed}. The distributed PGO method in \cite{choudhary2017distributed} relies on iterative distributed linear system solvers to evaluate the Gauss-Newton direction and then update the estimate using a single Gauss-Newton step, whereas the $\mm$ method in our paper minimizes an upper bound of PGO that is guaranteed to improve the current estimate and no Gauss-Newton directions are evaluated. Furthermore, to our knowledge, the distributed method in \cite{choudhary2017distributed} is not non-increasing and has no convergence guarantees.

\section{The Accelerated Majorization Minimization Method for Distributed PGO}\label{section::amm}
\begin{algorithm}[t]
	\caption{The $\amm$ Method}
	\label{algorithm::amm}
	\begin{algorithmic}[1]
		\State\textbf{Input}: An initial iterate $X^{(0)}\in \XX$.
		\State\textbf{Output}: A sequence of iterates $\{X^{(k)}\}$.\vspace{0.2em} 	
		\vspace{0.25em}
		\State $\nabla F(X^{(0)})\leftarrow X^{(0)}\nM$\label{line::alg2::grad_F0}
		\vspace{0.1em}
		\For{$\alpha=1,\,\cdots,\, A$}
		\State evaluate $\overline{G}^{\alpha(0)}$ using \cref{eq::Fa}\label{line::alg2::F0}
		\vspace{0.2em}
		\State $s^{\alpha(0)}\leftarrow 1$ 
		\vspace{0.2em}
		\State $X^{\alpha(-1)}\leftarrow X^{\alpha(0)}$,\; $\nabla F(X^{\alpha(-1)})\leftarrow \nabla F(X^{\alpha(0)})$
		\EndFor
		\For{$k=0,\,1,\,2,\,\cdots$}
		\vspace{0.3em}
		\For{$\alpha=1,\,\cdots,\, A$}  
		\State $s^{\alpha(k+1)}\leftarrow\tfrac{\sqrt{4{s^{\alpha(k)}}^2+1}+1}{2}$,\; $\gamma^{\alpha(k)}\leftarrow \tfrac{s^{\alpha(k)}-1}{s^{\alpha(k+1)}}$\label{line::s}
		\vspace{0.3em}
		\State $Y^{\alpha(k)}\leftarrow X^{\alpha(k)}+\gamma^{\alpha(k)}\cdot\left(X^{\alpha(k)}-X^{\alpha(k-1)}\right)$\label{line::nesterov1}
		\vspace{0.3em}
		\State\label{line::nesterov2}
		\vspace{-2em}
		\begin{multline}
		\nonumber
		\quad\quad\nabla F(Y^{\alpha(k)})\leftarrow \nabla F(X^{\alpha(k)})+\gamma^{\alpha(k)}\cdot\\
		\left(\nabla F(X^{\alpha(k)})-\nabla F(X^{\alpha(k-1)})\right)
		\end{multline} 
		\vspace{-1.6em}
		\vspace{0.3em}
		\State$Z^{\alpha(k+1)}\leftarrow\arg\min\limits_{Z^\alpha\in\XX^\alpha }G^\alpha(Z^\alpha|Y^{\alpha(k)})$
		\vspace{0.4em}
		\If{$G^\alpha(Z^{\alpha(k+1)}|X^{(k)}) > \overline{G}^{\alpha(k)}$}\label{line::restart1}
		\vspace{0.3em}
		\State$X^{\alpha(k+1)}\leftarrow\arg\min\limits_{X^\alpha\in\XX^\alpha }G^\alpha(X^\alpha|X^{(k)})$
		\vspace{0.3em}
		\State $s^{\alpha(k+1)}\leftarrow \max\{\tfrac{1}{2}s^{\alpha(k+1)},\,1\}$
		\Else
		\State $X^{\alpha(k+1)}\leftarrow Z^{\alpha(k+1)}$
		\EndIf\label{line::restart2}
		\EndFor
		\vspace{0.15em}
		\State $X^{(k+1)}\leftarrow\begin{bmatrix}
		X^{1(k+1)} & \cdots & X^{A(k+1)}
		\end{bmatrix}$
		\vspace{0.1em}
		\State $\nabla F(X^{(k+1)})\leftarrow X^{(k+1)}\nM$\label{line::alg2::grad_Fk}
		\vspace{0.1em}
		\For{$\alpha=1,\,\cdots,\, A$}
		\vspace{0.25em}
		\State evaluate $\overline{G}^{\alpha(k+1)}$ using \cref{eq::Fa}\label{line::alg2::Fk}
		\vspace{0.1em}
		\EndFor
		\EndFor
	\end{algorithmic}
\end{algorithm}

In the last thirty years, a number of accelerated first-order optimization methods have been proposed \cite{nesterov1983method,nesterov2013introductory}. Even though most of these accelerated methods were originally developed for convex optimization, it has been recently found that they empirically have a good performance for nonconvex optimization as well \cite{ghadimi2016accelerated,jin2018accelerated}. 

From \cref{eq::G}, it can be seen that $G(X|X^{(k)})$ is a proximal operator of $F(X)$, which suggests that the $\mm$ method is a proximal method, and most importantly, it is possible to exploit existing accelerated schemes for proximal methods \cite{nesterov1983method,nesterov2013introductory}. 

Similar to \cite{fan2019proximal},  we might extend the $\mm$ method to obtain the accelerated majorization minimization method for distributed PGO using Nesterov's method \cite{nesterov1983method,nesterov2013introductory}. The resulting algorithm is referred as the $\amm$ method (\cref{algorithm::amm}). For the $\amm$ method, each node $\alpha$ only needs pose estimates $g_j^{\beta(k)}$ of its neighbor node $\beta$ to evaluate $\nabla F(X^{(k)})$ and $\overline{G}^{\alpha(k)}$ in lines~\ref{line::alg2::grad_F0}, \ref{line::alg2::F0}, \ref{line::alg2::grad_Fk} and \ref{line::alg2::Fk} of \cref{algorithm::amm}. The $\amm$ method is equivalent to the $\mm$ method  when $s^{\alpha(k)}=1$, and is more governed by Nesterov's momentum as $s^{\alpha(k)}$ increases. From \cref{algorithm::amm}, the $\amm$ method  introduces Nesterov's momentum in lines~\ref{line::s} to \ref{line::nesterov2} for acceleration, and adopts a restart in lines~\ref{line::restart1} to \ref{line::restart2} to guarantee the convergence and improve the overall performance. In \cref{algorithm::amm}, there is no need to evaluate the objective of PGO, i.e., \cref{eq::obj,eq::objM}, which differs from the algorithm in \cite{fan2019proximal}, and thus, it is well-suited for distributed PGO. As \cref{prop::amm} states, the $\amm$ method has $F(X^{(k)})$ non-increasing and is guaranteed to converge to first-order critical points as long as \cref{assumption::local,assump::localZ} hold and $\xi > 0$. 

\begin{assumption}\label{assump::localZ}
For $Z^{\alpha(k+1)}\leftarrow\arg\min\limits_{Z^\alpha\in\XX^\alpha }G^\alpha(Z^\alpha|Y^{\alpha(k)})$ in the $\amm$ method, it is assumed that 
$$\mathrm{grad}\,G^\alpha(Z^{\alpha(k+1)}|Y^{\alpha(k)}) =\mathbf{0}$$
for all $\alpha=1,\,\cdots,\, A$.
\end{assumption}

\begin{prop}\label{prop::amm}
	If \cref{assumption::local,assump::localZ} hold, then for a sequence of iterates $\{X^{(k)}\}$ generated by \cref{algorithm::amm}, we obtain
	\begin{enumerate}[(1)]
		\item  $F(X^{(k)})$ is non-increasing;\label{prop::amm0}
		\item  $F(X^{(k)})\rightarrow F^\infty$ as $k\rightarrow\infty$;\label{prop::amm1}
		\item  $\|X^{(k+1)}-X^{(k)}\|\rightarrow 0$ as $k\rightarrow \infty$ if $\nGamma\succ\nM$;\label{prop::amm2}
		\item  $\mathrm{grad}\, F(X^{(k)})\rightarrow \0$ as $k\rightarrow \infty$ if $\nGamma\succ\nM$;\label{prop::amm3}
		\item  $\|X^{(k+1)}-X^{(k)}\|\rightarrow 0$ as $k\rightarrow \infty$ if $\xi >0$;\label{prop::amm4}
		\item  $\mathrm{grad}\, F(X^{(k)})\rightarrow \0$ as $k\rightarrow \infty$ if $\xi > 0$.\label{prop::amm5}
	\end{enumerate}
\end{prop}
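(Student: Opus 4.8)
The plan is to mimic the argument for Proposition \ref{prop::mm}, treating the $\amm$ method as a perturbation of the $\mm$ method in which the restart mechanism guarantees that we never do worse than a plain $\mm$ step. First I would establish the monotonicity claim \eqref{prop::amm0}: for each node $\alpha$, the test in line~\ref{line::restart1} compares $G^\alpha(Z^{\alpha(k+1)}|X^{(k)})$ against $\overline{G}^{\alpha(k)}$; since $G^\alpha(X^{\alpha(k)}|X^{(k)})=\overline{G}^{\alpha(k)}$ (the quadratic and linear terms in \eqref{eq::Ga} vanish at $X^\alpha=X^{\alpha(k)}$, leaving exactly $\overline{G}^{\alpha(k)}$), either the Nesterov candidate $Z^{\alpha(k+1)}$ already satisfies $G^\alpha(Z^{\alpha(k+1)}|X^{(k)})\le G^\alpha(X^{\alpha(k)}|X^{(k)})$ and we accept it, or we fall back to the exact minimizer of $G^\alpha(\cdot|X^{(k)})$, which by \cref{assumption::local} also satisfies $G^\alpha(X^{\alpha(k+1)}|X^{(k)})\le G^\alpha(X^{\alpha(k)}|X^{(k)})$. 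In both cases $G^\alpha(X^{\alpha(k+1)}|X^{(k)})\le\overline{G}^{\alpha(k)}$ for every $\alpha$; summing over $\alpha$ and using the decomposition \eqref{eq::Gsub} together with $G(X|X^{(k)})\ge F(X)$ and $G(X^{(k)}|X^{(k)})=F(X^{(k)})$ gives $F(X^{(k+1)})\le G(X^{(k+1)}|X^{(k)})\le G(X^{(k)}|X^{(k)})=F(X^{(k)})$, which is \eqref{prop::amm0}. Claim \eqref{prop::amm1} is then immediate since $F$ is bounded below (it is a sum of squared norms, hence nonnegative), so the non-increasing sequence $\{F(X^{(k)})\}$ converges to some $F^\infty$.

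Next I would prove the step-size decay \eqref{prop::amm2} and \eqref{prop::amm4}. From the chain of inequalities above, $F(X^{(k)})-F(X^{(k+1)})\ge G(X^{(k)}|X^{(k)})-G(X^{(k+1)}|X^{(k)})$, and using the explicit form \eqref{eq::G} with $\nabla F(X^{(k)})=X^{(k)}\nM$ one rewrites the right-hand side, after completing the square, as $\tfrac12\innprod{(\nGamma-\nM)(X^{(k+1)}-X^{(k)})}{X^{(k+1)}-X^{(k)}}$. When $\nGamma\succ\nM$ this dominates a positive multiple of $\|X^{(k+1)}-X^{(k)}\|^2$; since the left-hand side telescopes to $F(X^{(0)})-F^\infty<\infty$, we conclude $\|X^{(k+1)}-X^{(k)}\|\to0$, giving \eqref{prop::amm2}. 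For \eqref{prop::amm4}, note that $\xi>0$ forces $\nGamma=\nH+\xi\I\succeq\nM+\xi\I\succ\nM$ (using $\nH\succeq\nM$ from \cref{prop::G}), so \eqref{prop::amm4} is a special case of \eqref{prop::amm2}; I would state it separately only to match the format of \cref{prop::mm}.

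Finally, for the first-order criticality claims \eqref{prop::amm3} and \eqref{prop::amm5}, the key is to relate $\grad F(X^{(k+1)})$ to the Riemannian gradient of the node subproblems, which vanishes by construction. On an accepted Nesterov step, \cref{assump::localZ} gives $\grad G^\alpha(X^{\alpha(k+1)}|Y^{\alpha(k)})=\0$, and on a restart step \cref{assumption::local} gives $\grad G^\alpha(X^{\alpha(k+1)}|X^{(k)})=\0$. Differentiating \eqref{eq::Ga}, the Euclidean gradient of $G^\alpha(\cdot|W)$ at $X^{\alpha(k+1)}$ is $\nGamma^\alpha(X^{\alpha(k+1)}-W^\alpha)+\nabla_\alpha F(W)$, so in either case the Riemannian projection of $\nabla_\alpha F(X^{(k+1)})$ differs from that of $\nabla_\alpha F(X^{(k+1)})-\nGamma^\alpha(X^{\alpha(k+1)}-W^\alpha)-\nabla_\alpha F(W)=\0$ only through a term controlled by $\|\nGamma^\alpha(X^{\alpha(k+1)}-W^\alpha)\|$ plus $\|\nabla_\alpha F(X^{(k+1)})-\nabla_\alpha F(W)\|$; since $\nabla F$ is a linear (hence Lipschitz) map $X\mapsto X\nM$, both are bounded by a constant times $\|X^{\alpha(k+1)}-W^\alpha\|$, where $W$ is either $X^{(k)}$ or the extrapolated $Y^{(k)}$. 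The remaining point, which I expect to be the main obstacle, is bounding $\|X^{\alpha(k+1)}-Y^{\alpha(k)}\|$ on accepted steps: one controls $\|Y^{\alpha(k)}-X^{\alpha(k)}\|=\gamma^{\alpha(k)}\|X^{\alpha(k)}-X^{\alpha(k-1)}\|\le\|X^{\alpha(k)}-X^{\alpha(k-1)}\|$ (since $\gamma^{\alpha(k)}\in[0,1)$) which tends to zero by \eqref{prop::amm2}, and $\|X^{\alpha(k+1)}-Y^{\alpha(k)}\|$ is controlled by the same completing-the-square estimate applied to the subproblem $G^\alpha(\cdot|Y^{\alpha(k)})$ together with the acceptance inequality $G^\alpha(Z^{\alpha(k+1)}|X^{(k)})\le\overline{G}^{\alpha(k)}$; combining these, $\|X^{\alpha(k+1)}-X^{\alpha(k)}\|\to0$ and hence $\|X^{\alpha(k+1)}-W^\alpha\|\to0$ whether or not a restart occurs. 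Feeding this back into the gradient identity and using continuity of the Riemannian projection yields $\grad F(X^{(k)})\to\0$, which is \eqref{prop::amm5}, and \eqref{prop::amm3} follows identically under the weaker hypothesis $\nGamma\succ\nM$. I would refer the reader to \cite[Appendix C]{fan2020mm_full} for the detailed estimates, exactly as the paper does for the preceding propositions.
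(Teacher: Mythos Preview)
Your proposal follows essentially the same route as the paper's proof. Two remarks, though. For part~\eqref{prop::amm2} you claim that $G(X^{(k)}|X^{(k)})-G(X^{(k+1)}|X^{(k)})$ rewrites, after completing the square, as $\tfrac12\innprod{(\nGamma-\nM)(X^{(k+1)}-X^{(k)})}{X^{(k+1)}-X^{(k)}}$; it does not. The paper instead uses the other half of the sandwich, $G(X^{(k+1)}|X^{(k)})-F(X^{(k+1)})$, which \emph{is} exactly that quadratic form (subtract the quadratic expansion of $F$ about $X^{(k)}$ from \eqref{eq::G}); the needed inequality $F(X^{(k)})-F(X^{(k+1)})\ge G(X^{(k+1)}|X^{(k)})-F(X^{(k+1)})$ then follows from $F(X^{(k)})\ge G(X^{(k+1)}|X^{(k)})$, which you already established in part~\eqref{prop::amm0}. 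For part~\eqref{prop::amm3} you over-complicate matters: there is no separate ``completing-the-square'' obstacle for $\|X^{\alpha(k+1)}-Y^{\alpha(k)}\|$. The paper simply substitutes $Y^{(k)}=X^{(k)}+(X^{(k)}-X^{(k-1)})\gamma^{(k)}$ directly into the gradient identity $\grad_\alpha F(X^{(k+1)})=\QQ_{X^{(k+1)}}^\alpha\big((W-X^{(k+1)})(\nGamma-\nM)\big)$ (with $W=Y^{(k)}$ or $W=X^{(k)}$ depending on the branch) and bounds $\|\grad_\alpha F(X^{(k+1)})\|$ by a constant times $\|X^{(k+1)}-X^{(k)}\|+\|X^{(k)}-X^{(k-1)}\|$, both of which vanish by part~\eqref{prop::amm2}. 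Also note your Lipschitz bound on $\nabla_\alpha F$ should involve the full $\|X^{(k+1)}-W\|$, not merely $\|X^{\alpha(k+1)}-W^\alpha\|$, since $\nM$ is not block-diagonal.
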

\begin{proof}
See \cite[Appendix C]{fan2020mm_full}.
\end{proof}

\section{The Majorization Minimization Method for The Distributed Chordal Initialization}\label{section::chordal}
In PGO, the chordal initialization is one of the most popular initialization techniques \cite{carlone2015initialization}, however, the distributed chordal initialization remains challenging \cite{choudhary2017distributed}. In this section, we will present majorization minimization methods for the distributed chordal initialization that have a quadratic convergence. 

The chordal initialization relaxes $SO(d)^n$ to $\R^{d\times dn}$ and solves the convex optimization problem
\vspace{-0.5em}
\begin{equation}\label{eq::chordal}
\min_{R\in \RR} F_R(R)
\vspace{-.5em}
\end{equation}
In \cref{eq::chordal}, $F_R(R)$, $R$ and $\RR$ are respectively defined to be
\vspace{-1.5em}  
\begin{multline}\label{eq::objR}
F_R(R)\triangleq \sum_{\alpha\in\AA}\sum_{(i,j)\in \aEE^{\alpha\alpha}}\frac{1}{2}\kappa_{ij}^{\alpha\alpha}\|R_i^\alpha \nR_{ij}^{\alpha\alpha} -R_j^\alpha\|^2 +\\ 
\sum_{\substack{\alpha,\beta\in\AA,\\\alpha\neq \beta}}\sum_{(i,j)\in \aEE^{\alpha\beta}}\frac{1}{2}\kappa_{ij}^{\alpha\beta}\|R_i^\alpha \nR_{ij}^{\alpha\beta} -R_j^\beta\|^2,
\vspace{-1em}
\end{multline}
and $R\triangleq\begin{bmatrix}
R^1 & \cdots & R^A
\end{bmatrix}\in \R^{d\times dn}$ in which $R^\alpha\triangleq\begin{bmatrix}
R_1^\alpha & \cdots & R_{n_\alpha}^\alpha
\end{bmatrix}\in \R^{d\times dn_\alpha}$, and $\mathcal{R}\triangleq \RR^1\times\cdots\times \RR^A $ in which $\RR^1\triangleq\{R^1\in \R^{d\times dn_1}| R_1^1=\I\in \R^{d\times d}\}$ and $\RR^\alpha\triangleq \R^{d\times dn_\alpha}$  if $\alpha\neq 1$. From \cref{eq::inequality}, if $R^{(k)}$ is the current estimate of $R$, we might obtain an upper bound $G_R(R|R^{(k)})$ of $F(R)$
\vspace{-1.5em}
\begin{multline}\label{eq::GR}
G_R(R|R^{(k)})\triangleq \sum_{\alpha\in \AA}\sum_{(i,j)\in \aEE^{\alpha\alpha}}\frac{1}{2}\kappa_{ij}^{\alpha\alpha}\|R_i^\alpha \nR_{ij}^{\alpha\alpha} -R_j^\alpha\|^2+\\
\sum_{\substack{\alpha,\beta\in \AA,\\\alpha\neq \beta}}\sum_{(i,j)\in \aEE^{\alpha\beta}}\Big[\kappa_{ij}^{\alpha\beta}\|R_i^\alpha \nR_{ij}^{\alpha\beta} -P_{ij}^{\alpha\beta(k)}\|^2 +\\ 
+\kappa_{ij}^{\alpha\beta}\|R_j^\beta -P_{ij}^{\alpha\beta(k)}\|^2\Big]+\frac{1}{2}\xi \|R-R^{(k)}\|^2,
\end{multline}  
in which $P_{ij}^{\alpha\beta(k)}\in \R^{d\times d}$ is defined as \cref{eq::P} and $\xi\geq0$.

\begin{algorithm}[t]
	\caption{The $\nag$ Method}
	\label{algorithm::nag}
	\begin{algorithmic}[1]
		\State\textbf{Input}: An initial iterate $R^{(0)}\in \RR$.\label{line::nag::input}
		\State\textbf{Output}: A sequence of iterates $\{R^{(k)}\}$.\vspace{0.2em} 	
		\vspace{0.25em}
		\State evaluate $\nabla F_R(R^{(0)})$
		\vspace{0.1em}
		\For{$\alpha=1,\,\cdots,\, A$}
		\vspace{0.2em}
		\State $G_R^{\alpha(0)}\leftarrow G_R^{\alpha}(X|X^{\alpha(0)})$
		\vspace{0.2em}
		\State $s^{\alpha(0)}\leftarrow 1$ 
		\vspace{0.2em}
		\State $R^{\alpha(-1)}\leftarrow R^{\alpha(0)}$,\; $\nabla F_R(R^{\alpha(-1)})\leftarrow \nabla F_R(R^{\alpha(0)})$
		\EndFor
		\For{$k=0,\,1,\,2,\,\cdots$}
		\vspace{0.3em}
		\For{$\alpha=1,\,\cdots,\, A$} 
		\vspace{0.2em} 
		\State $s^{\alpha(k+1)}\leftarrow\tfrac{\sqrt{4{s^{\alpha(k)}}^2+1}+1}{2}$,\; $\gamma^{\alpha(k)}\leftarrow \tfrac{s^{\alpha(k)}-1}{s^{\alpha(k+1)}}$\label{line::alg3::s}
		\vspace{0.3em}
		\State $Y^{\alpha(k)}\leftarrow R^{\alpha(k)}+\gamma^{\alpha(k)}\cdot\left(R^{\alpha(k)}-R^{\alpha(k-1)}\right)$
		\vspace{0.3em}
		\State
		\vspace{-2em}
		\begin{multline}
		\nonumber
		\quad\quad\nabla F_R(Y^{\alpha(k)})\leftarrow \nabla F_R(R^{\alpha(k)})+\gamma^{\alpha(k)}\cdot\\
		\left(\nabla F_R(R^{\alpha(k)})-\nabla F_R(R^{\alpha(k-1)})\right)
		\end{multline} 
		\vspace{-1.6em}
		\vspace{0.3em}
		\State $R^{\alpha(k+1)}\leftarrow\arg\min\limits_{Z^\alpha\in\RR^\alpha }G_R^\alpha(Z^\alpha|Y^{\alpha(k)})$
		\vspace{0.4em}
		\EndFor
		\vspace{0.15em}
		\State $R^{(k+1)}\leftarrow\begin{bmatrix}
		R^{1(k+1)} & \cdots & R^{A(k+1)}
		\end{bmatrix}$
		\vspace{0.3em}
		\State evaluate $\nabla F_R(R^{(k+1)})$
		\vspace{0.1em}
		\EndFor
	\end{algorithmic}
\end{algorithm}

\begin{figure*}[t]
	\centering
	\begin{tabular}{ccc}
		\hspace{-0.25em}\subfloat[][]{\includegraphics[trim =0mm 0mm 0mm 0mm,width=0.28\textwidth]{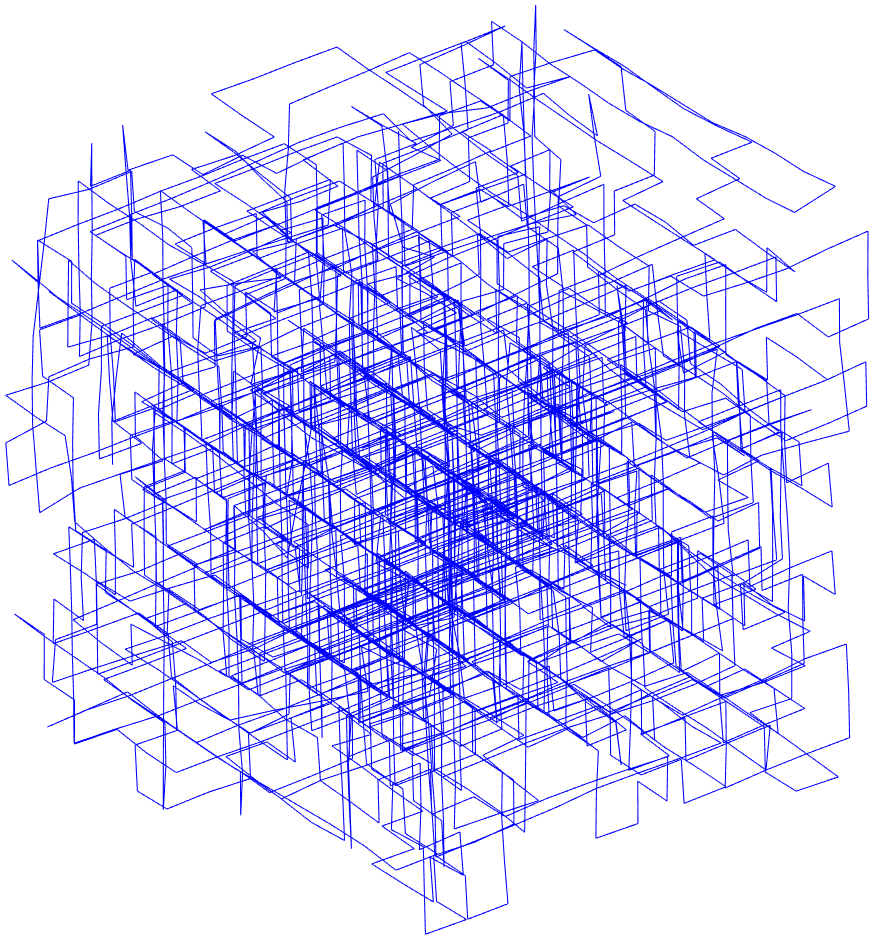}} &
		\hspace{-0.25em}\subfloat[][]{\includegraphics[trim =0mm 0mm 0mm 0mm,width=0.32\textwidth]{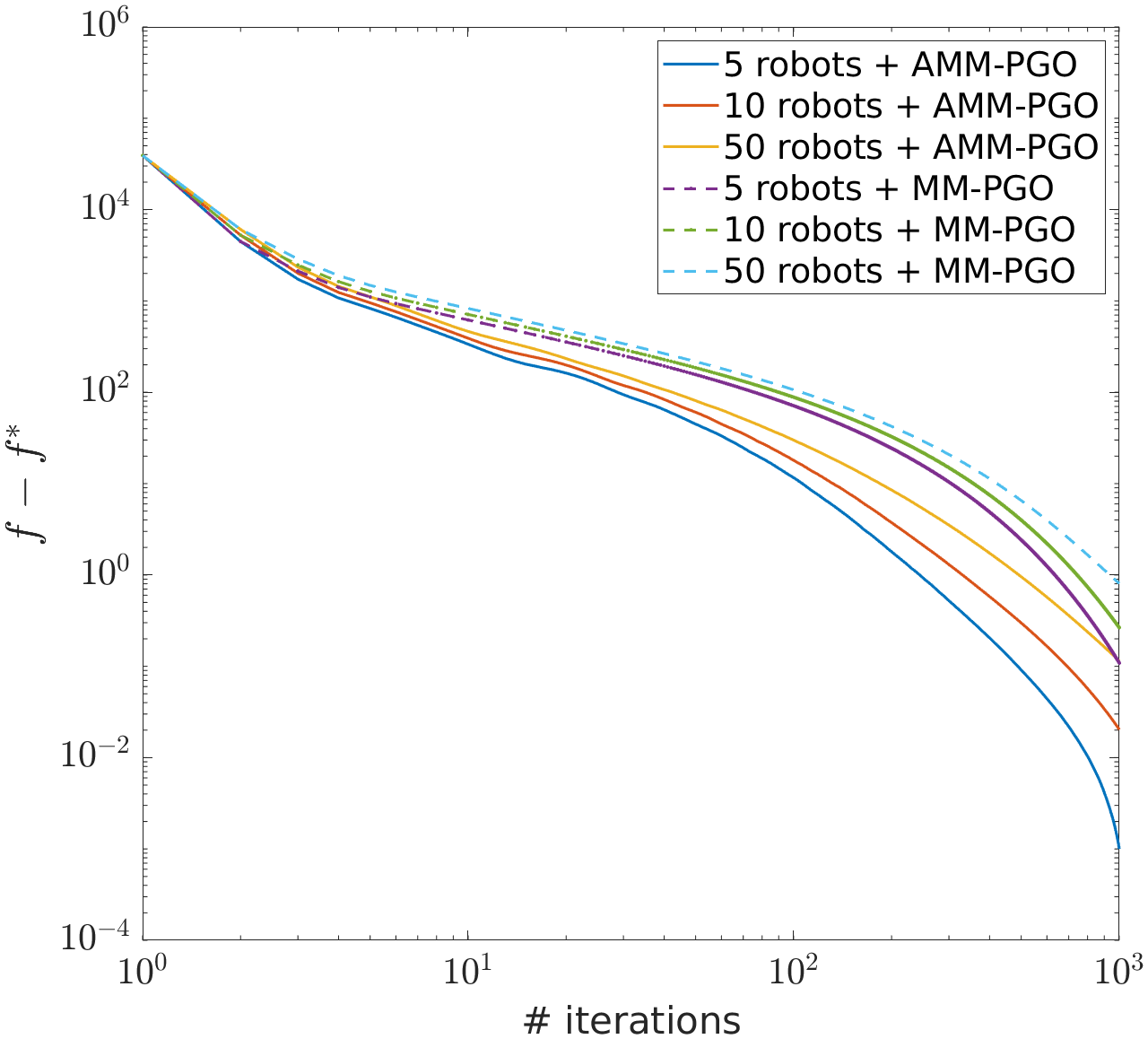}} &
		\hspace{-0.25em}\subfloat[][]{\includegraphics[trim =0mm 0mm 0mm 0mm,width=0.32\textwidth]{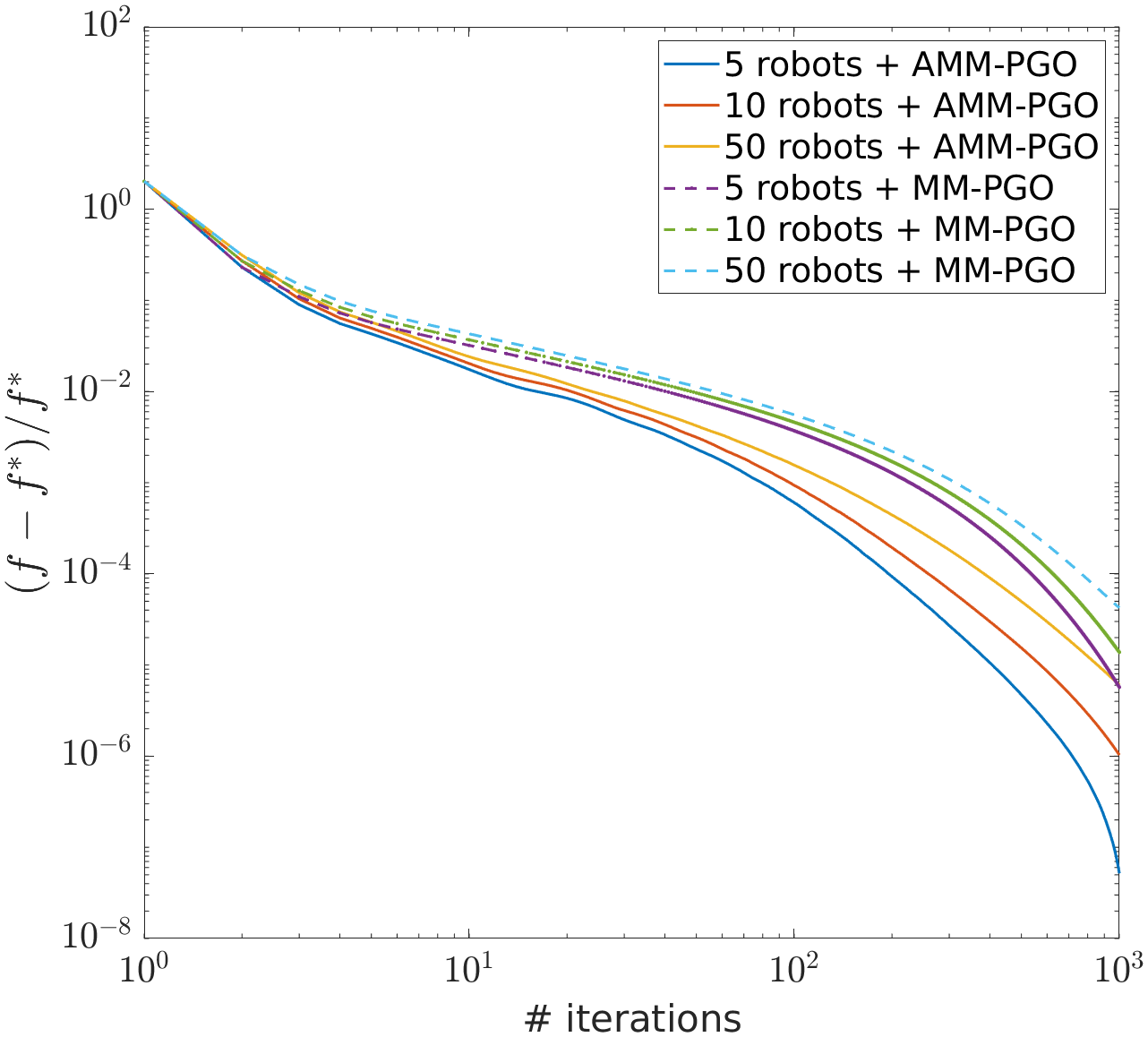}}  
	\end{tabular}
	\caption{The comparisons of the $\mm$ and $\amm$ methods on $20$ {\sf\small Cube} datasets with $5$, $10$ and $50$ robots in which the maximum number of iterations is $1000$. Each {\sf\small Cube} dataset has $12\times12\times12$ grids of side length of $1$ m, $3600$ poses, probability of loop closure of $0.1$, an translational RSME of $\sigma_t = 0.02$ m and an angular RSME of $\sigma_R=0.02\pi$ rad. The results are (a) an example of the {\sf\small Cube} dataset,  (b) suboptimality gap $f-f^*$ and (c) relative suboptimality gap $(f-f^*)/f^*$. In (b) and (c), $f$ is the objective attained by the $\mm$ and $\amm$ methods and $f^*$ is the globally optimal objective attained by SE-Sync \cite{rosen2016se}.}
		\label{fig::cube} 
		\vspace{-1em}
\end{figure*}

In a similar way to $G(X|X^{(k)})$ in \cref{eq::G}, it can be shown that $G_R(R|R^{(k)})$ is a proximal operator of $F_R(R)$ at $R^{(k)}$ and there exists $G^\alpha(R^{\alpha}|R^{\alpha(k)})$ such that $G_R(R|R^{(k)})=\sum_{i=1}^A G_R^\alpha(R^\alpha|R^{\alpha(k)})$ and
\begin{equation}
\nonumber
\min_{R\in \RR} G_R(R|R^{(k)})
\end{equation}
is equivalent to solving $A$ independent convex optimization subproblems
\begin{equation}
\nonumber
\min_{R^\alpha\in \RR^\alpha} G_R^\alpha(R^\alpha|R^{\alpha(k)}).
\end{equation}
From Nesterov's method \cite{nesterov1983method,nesterov2013introductory}, we obtain the $\nag$ method (\cref{algorithm::nag}), which is an accelerated majorization minimization method to solve the distributed chordal initialization of \cref{eq::chordal}. Furthermore, since the chordal initialization is a convex optimization problem, the $\nag$ method quadratically converges to the global optimum of \cref{eq::chordal} as follows.

\begin{prop}\label{prop::chordal}
The $\nag$ method has a convergence rate of $O(1/k^2)$ to the global optimum of the distributed choral initialization of \cref{eq::chordal}.
\end{prop}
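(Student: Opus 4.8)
The plan is to recognize the $\nag$ method as Nesterov's accelerated proximal-gradient (FISTA) iteration applied to the convex program \cref{eq::chordal} in the fixed quadratic metric induced by a block-diagonal positive semidefinite matrix $\Gamma_R$, and then to invoke the classical $O(1/k^2)$ guarantee for that iteration. The first step is to cast the upper bound \cref{eq::GR} in the same quadratic-model form that \cref{prop::G} provides for $F$. Since $F_R$ in \cref{eq::objR} is a sum of squared Frobenius norms of linear functions of $R$, it is convex, equals $\tfrac12\trace(R\,M_R\,R^{\transpose})$ for a positive semidefinite data matrix $M_R$, and --- for a connected pose graph, given the anchor $R_1^1=\I$ --- is coercive on $\RR$, hence attains a unique minimizer $R^\star$ with value $F_R^\star$. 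Repeating the argument of \cref{prop::G} with the translation terms deleted, \ie applying \cref{eq::inequality} to each inter-node rotation cost and then adding the regularizer $\tfrac12\xi\|R-R^{(k)}\|^2$, produces a constant block-diagonal matrix $\Gamma_R=\diag\{\Gamma_R^1,\dots,\Gamma_R^A\}$ with $\Gamma_R\succeq M_R$ such that
\begin{equation}\label{eq::GRquad}
G_R(R\,|\,R')=F_R(R')+\innprod{\nabla F_R(R')}{R-R'}+\tfrac12\innprod{\Gamma_R(R-R')}{R-R'}\ \geq\ F_R(R)
\end{equation}
for all $R,R'$, with equality at $R=R'$; in particular \cref{eq::GRquad} is precisely the descent inequality (in the $\Gamma_R$-metric) $F_R(R^{(k+1)})\leq G_R(R^{(k+1)}\,|\,Y^{(k)})$ that the FISTA analysis requires.

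The second step is to identify the iteration. Because $\Gamma_R$ is block diagonal and $\RR=\RR^1\times\cdots\times\RR^A$ is a product of affine subspaces, $\min_{Z\in\RR}G_R(Z\,|\,Y^{(k)})$ decouples exactly into the per-node convex subproblems $\min_{Z^\alpha\in\RR^\alpha}G_R^\alpha(Z^\alpha\,|\,Y^{\alpha(k)})$ solved in \cref{algorithm::nag}; and since the sequences $s^{\alpha(k)}$ and $\gamma^{\alpha(k)}$ are identical for every $\alpha$ (they are deterministic and data-independent), the per-node extrapolations assemble into the single joint extrapolation $Y^{(k)}=R^{(k)}+\gamma^{(k)}(R^{(k)}-R^{(k-1)})$ with $\gamma^{(k)}=(s^{(k)}-1)/s^{(k+1)}$. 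Hence, read jointly over all nodes, the $\nag$ method is exactly the accelerated proximal-gradient scheme $R^{(k+1)}=\arg\min_{R\in\RR}\{\innprod{\nabla F_R(Y^{(k)})}{R-Y^{(k)}}+\tfrac12\innprod{\Gamma_R(R-Y^{(k)})}{R-Y^{(k)}}\}$ for the composite objective $F_R+\iota_{\RR}$ (where $\iota_{\RR}$ is the $\{0,+\infty\}$-valued indicator of $\RR$), with the standard FISTA momentum $s^{(k+1)}=\tfrac12(1+\sqrt{1+4(s^{(k)})^2})$ and $s^{(0)}=1$; note that \cref{algorithm::nag}, unlike \cref{algorithm::amm}, carries no restart, so the plain analysis applies.

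The third step is the convergence estimate. Assume $\xi>0$, which makes $\Gamma_R\succ 0$ (the $\xi$-regularization in \cref{eq::GR} contributes $\xi\I$ to the otherwise positive semidefinite $\Gamma_R$), so that $\|V\|_{\Gamma_R}^2\triangleq\innprod{\Gamma_R V}{V}$ defines a genuine norm and each prox subproblem is strongly convex with a unique minimizer. Combining the descent inequality from the first step, the convexity of $F_R$, the optimality condition for $R^{(k+1)}$ in its prox subproblem, and the identity $(s^{(k+1)})^2-s^{(k+1)}=(s^{(k)})^2$, the standard Beck--Teboulle/Nesterov argument shows that
\begin{equation}\label{eq::Phi}
\Phi_k\triangleq(s^{(k)})^2\big(F_R(R^{(k)})-F_R^\star\big)+\tfrac12\big\|R^\star-R^{(k)}-(s^{(k)}-1)(R^{(k)}-R^{(k-1)})\big\|_{\Gamma_R}^2
\end{equation}
is non-increasing in $k$. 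Using $R^{(-1)}=R^{(0)}$ and $s^{(0)}=1$ gives $\Phi_0=\big(F_R(R^{(0)})-F_R^\star\big)+\tfrac12\|R^\star-R^{(0)}\|_{\Gamma_R}^2$, while $s^{(k)}\geq s^{(0)}+k/2=(k+2)/2$, so that
\begin{equation}\label{eq::rate}
F_R(R^{(k)})-F_R^\star\ \leq\ \frac{\Phi_0}{(s^{(k)})^2}\ \leq\ \frac{4\,\Phi_0}{(k+2)^2}\ =\ O(1/k^2),
\end{equation}
which is the assertion.

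The only genuinely nonroutine point is the identification carried out in the second step: that the decoupled, node-wise updates of \cref{algorithm::nag} reproduce the centralized $\Gamma_R$-prox step, so that the centralized FISTA theory transfers verbatim to the distributed algorithm. This rests on the momentum sequences being node-independent and on $\Gamma_R$ being block diagonal, both guaranteed by the construction in the first step. Everything else --- verifying \cref{eq::GRquad}, which is essentially the proof of \cref{prop::G} with the translational terms suppressed, and the telescoping of \cref{eq::Phi}, which is the textbook FISTA calculation --- I would not spell out in detail.
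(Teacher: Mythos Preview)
Your proposal is correct and follows the same approach as the paper: recognize the $\nag$ method as Nesterov's accelerated scheme applied to the convex program \cref{eq::chordal} and invoke the standard $O(1/k^2)$ rate. The paper's own proof (Appendix~D) is a one-line citation to \cite{nesterov1983method,nesterov2013introductory}, whereas you carry out the details the paper leaves implicit --- the quadratic majorization \cref{eq::GRquad}, the reduction of the node-wise updates to a single centralized FISTA step via the node-independence of $s^{(k)}$ and the block-diagonality of $\Gamma_R$, and the Lyapunov telescoping --- so your argument is strictly more complete.
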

\begin{proof}
See \cite[Appendix D]{fan2020mm_full}.
\end{proof}

The resulting solution to the chordal initialization might not satisfy the orthogonal constraints, and we need to project each $R_i^\alpha$ from $\R^{d\times d}$ to $SO(d)$ using the singular value decomposition \cite{umeyama1991least} to get the initial guess $R^{(0)}\in SO(d)^n$ of the rotation $R\in SO(d)^n$. 

It is possible to further obtain an initial guess $t^{(0)}\in t^{d\times n}$ of the translation $t\triangleq\begin{bmatrix}
t^1 & \cdots & t^A
\end{bmatrix}\in t^{d\times n}$ with $t^\alpha\triangleq\begin{bmatrix}
t_1^\alpha & \cdots & t_{n_\alpha}^\alpha
\end{bmatrix}\in \R^{d\times n_\alpha}$ by substituting $R^{(0)}$ into \cref{eq::obj} and solving the optimization problem
\vspace{-0.5em}
\begin{equation}\label{eq::chordal_t}
\min_{t\in \TT} F_t(t),
\end{equation}
\vspace{-0.25em}
in which
\vspace{-0.75em}  
\begin{multline}\label{eq::objt}
F_t(t)\triangleq \sum_{\alpha\in\AA}\sum_{(i,j)\in \aEE^{\alpha\alpha}}\frac{1}{2}\tau_{ij}^{\alpha\alpha}\|R_i^{\alpha(0)} \nt_{ij}^{\alpha\alpha}+t_i^\alpha - t_j^\alpha\|^2 +\\ 
\sum_{\substack{\alpha,\beta\in\AA,\\\alpha\neq \beta}}\sum_{(i,j)\in \aEE^{\alpha\beta}}\frac{1}{2}\tau_{ij}^{\alpha\beta}\|R_i^{\alpha (0)}\nt_{ij}^{\alpha\beta}+t_i^\alpha - t_j^\beta\|^2,
\end{multline}
and $\TT\triangleq \TT^1\times\cdots\times \TT^A\subset\R^{d\times A} $ with $\TT^1\triangleq\{t^1\in \R^{d\times n_1}| t_1^1=\0\in \R^{d}\}$ and $\TT^\alpha\triangleq \R^{d\times n_\alpha}$  if $\alpha\neq 1$. Following a similar procedure to \cref{eq::chordal}, \cref{eq::chordal_t} can be solved with the majorization minimization method, from which we obtain an initial guess  $t^{(0)}\in \R^{d\times n}$ of the translation $t\in \R^{d\times n}$.

\vspace{-0.15em}
\section{Numerical Experiments}\label{section::results}
In this section, we evaluate the performance of our proposed majorization minimization ($\mm$ and $\amm$) methods for distributed PGO on the simulated \textsf{\small Cube} datasets and a number of 2D and 3D SLAM benchmark datasets \cite{rosen2016se}. We also make comparisons with the distributed Gauss-Seidel ($\dgs$) method in \cite{choudhary2017distributed}, which is the state-of-the-art method for distributed PGO. We use the certifiably-correct algorithm SE-Sync \cite{rosen2016se} to provide the ground truth and globally optimal objective for all the datasets. All the experiments have been performed on a laptop with an Intel i7-8750H CPU and 32GB of RAM running Ubuntu 18.04 and using g++ 7.8 as C++ compiler. For both $\mm$ and $\amm$ methods, $\xi$ in \cref{eq::Gammaxi} is chosen to be $0.001$, and the $\dgs$ method uses the default settings.

\vspace{-0.35em}
\subsection{\textsf{\small Cube} Datasets}
In this section, we evaluate the convergence of the $\mm$ and $\amm$ methods on $20$ simulated \textsf{\small Cube} datasets with $5$, $10$ and $50$ robots.

In the experiments, a {\sf\small Cube} dataset (\cref{fig::cube}(a)) has $12 \times 12 \times 12$ cube grids with $1$ m side length, and a path of $3600$ poses along the rectilinear edge of the cube grid, and odometric measurements between all the pairs of sequential poses, and loop-closure measurements between nearby but non-sequential poses that are randomly available with a probability of $0.1$. We generate the odometric and loop-closure measurements according to the noise models in \cite{rosen2016se} with an expected translational RMSE  of $\sigma_t=0.02$ m and an expected angular RMSE of $\sigma_R=0.02\pi$ rad.  

The results of a maximum of $1000$ iterations are as shown in \cref{fig::cube}, which has the suboptimality gap $f-f^*$ and the relative suboptimality gap $(f-f^*)/f^*$ in (b) and (c), respectively. In \cref{fig::cube}, $f$ is the objective attained by the  the $\mm$ and $\amm$ methods and $f^*$ is the globally optimal objective attained by SE-Sync \cite{rosen2016se}. It can be seen from \cref{fig::cube} that both $\mm$ and $\amm$ methods have better convergence as the number of robots decreases, which is not surprising since $E(X|X^{(k)})$ in \cref{eq::E,eq::Easum} results in a tighter approximation of distributed PGO in \cref{eq::obj,eq::objM} with fewer robots. In addition, the $\amm$ method always outperforms the $\mm$ method in terms of the convergence, which suggests that Nesterov's method accelerates distributed PGO. In particular, it should be noted that the $\amm$ method with $50$ robots converges almost faster than the $\mm$ method with $5$ robots, which further indicates that the $\amm$ method is well suited for distributed PGO considering the fact that no theoretical guarantees are compromised and only limited extra computation is introduced in acceleration.   

\vspace{-0.15em}
\subsection{SLAM Benchmark Datasets}
In this section, we compare the $\mm$ and $\amm$ methods with the distributed Gauss-Seidel ($\dgs$) method \cite{choudhary2017distributed}, which is the state-of-the-art method for distributed PGO on a number of 2D and 3D SLAM benchmark datasets. It should be noted that originally the $\mm$ and $\amm$ methods and the $\dgs$ method adopt different algorithms to initialize the rotation $R\in SO(d)^n$, and the $\mm$ and $\amm$ methods initialize the translation $t\in\R^{d\times n}$, whereas the $\dgs$ method does not. Therefore, in order to make the comparisons fair, we initialize the $\mm$ and $\amm$ methods and the $\dgs$ method with the centralized chordal initialization for both the rotation $R\in SO(d)^n$ and the translation $t\in\R^{d\times n}$. 

In the experiments, the $\dgs$ method is assigned an ordering according to which the poses of each robot are updated, which improves the convergence performance. Even though such an ordering reduces the number of iterations, parts of the robots have to stay idle until poses of the other robots are updated, which might increase the overall computational time in the end, and thus, is not that desirable in distributed PGO. In contrast, the $\mm$ and $\amm$ methods update the poses of all the robots at the same time and no ordering is needed.

The $\mm$ and $\amm$ methods and the $\dgs$ method are evaluated with $10$ robots. The results are shown as \cref{table::2Dcomparison,table::3Dcomparison}, in which $f^*$ is the objective value of the globally optimal objective attained by SE-Sync \cite{rosen2016se} and $f$ is the objective attained by each method with the given number of iterations, i.e., $100$, $250$ and $1000$. For each dataset and each number of iterations, the best and second results are colored in red and blue, respectively. From \cref{table::2Dcomparison,table::3Dcomparison}, the $\amm$ method outperforms the $\dgs$ method \cite{choudhary2017distributed} on all the datasets except the {\sf intel} dataset, for which the chordal initialization is sufficient for one Gauss-Newton step to attain the global optimum. Even though the $\mm$ method is inferior to the $\amm$ method, it still has a better performance on most of the datasets than the $\dgs$ method. Furthermore, the $\mm$ and $\amm$ are theoretically guaranteed to improve the estimates as the number of iterations increases, whereas the $\dgs$ method, which is equivalent to a one-step Gauss-Newton method, is not --- on the {\sf ais2klinik} dataset, the $\dgs$ method has the objective of $250$ iterations greater than that of $100$ iterations --- and as a matter of fact, as discussed in \cite{rosen2014rise}, the convergence of the Gauss-Newton method without stepsize tuning can not be guaranteed. 

\begin{table*}[h]
	\renewcommand{\arraystretch}{1.3}
	\centering
	\begin{tabular}{|c||c|c|c||c|c|c|c|c|}
		\hline			
		\multirow{2}{*}{Dataset}&\multirow{2}{*}{\# poses}&\multirow{2}{*}{\# edges}&\multirow{2}{*}{$f^*$} &\multirow{2}{*}{\# iterations} &\multicolumn{3}{c|}{$f$}\\
		\cline{6-8}
		&& &&  &{$\mm$ [ours]} &{$\amm$ [ours]} & {$\dgs$\cite{choudhary2017distributed}} \\
		\hline\hline
		\multirow{3}{*}{\sf ais2klinik} &\multirow{3}{*}{$15115$}&\multirow{3}{*}{$16727$}&\multirow{3}{*}{$1.885\times 10^2$}&100&$\color{blue}2.012\times 10^2$ &$\color{red}1.982\times 10^2$ &$8.646\times 10^2$\\
		\cline{5-8}
		& & & & 250 &$\color{blue}1.992\times 10^2$ &$\color{red}1.962\times 10^2$ &$9.315\times 10^2$ \\
		\cline{5-8}
		& & & & 1000&$\color{blue}1.961\times 10^2$ &$\color{red}1.933\times 10^2$ &$3.350\times 10^2$ \\
		\hline
		\hline
		\multirow{3}{*}{\sf city} &\multirow{3}{*}{$10000$}&\multirow{3}{*}{$20687$}&\multirow{3}{*}{$6.386\times10^2$}&100&$\color{blue}6.556\times 10^2$ &$\color{red}6.524\times 10^2$ &$7.989\times 10^2$\\
		\cline{5-8}
		& & & & 250 &$\color{blue}6.529\times 10^2$ &$\color{red}6.484\times 10^2$ &$7.055\times 10^2$ \\
		\cline{5-8}
		& & & & 1000&$\color{blue}6.473\times 10^2$ &$\color{red}6.418\times 10^2$ &$6.562\times 10^2$ \\
		\hline
		\hline
		\multirow{3}{*}{\sf CSAIL} &\multirow{3}{*}{$1045$}&\multirow{3}{*}{$1172$}&\multirow{3}{*}{$3.170\times10^1$}&100&$\color{blue}3.170\times 10^1$ &$\color{red}3.170\times 10^1$ &$3.248\times 10^1$\\
		\cline{5-8}
		& & & & 250 &$\color{blue}3.170\times 10^1$ &$\color{red}3.170\times 10^1$ &$3.179\times 10^1$ \\
		\cline{5-8}
		& & & & 1000&$\color{blue}3.170\times 10^1$ &$\color{red}3.170\times 10^1$ &$3.171\times 10^1$ \\
		\hline
		\hline
		\multirow{3}{*}{\sf M3500} &\multirow{3}{*}{$3500$}&\multirow{3}{*}{$5453$}&\multirow{3}{*}{$1.939\times10^2$}&100&$\color{blue}1.952\times 10^2$ &$\color{red}1.947\times 10^2$ &$1.956\times 10^2$\\
		\cline{5-8}
		& & & & 250 &$1.947\times 10^2$ &$\color{red}1.944\times 10^2$ &$\color{blue}1.946\times 10^2$ \\
		\cline{5-8}
		& & & & 1000&$1.943\times 10^2$ &$\color{red}1.940\times 10^2$ &$\color{blue}1.943\times 10^2$ \\
		\hline
		\hline
		\multirow{3}{*}{\sf intel} &\multirow{3}{*}{$1728$}&\multirow{3}{*}{$2512$}&\multirow{3}{*}{$5.235\times10^1$}&100&$5.257\times 10^1$ &$\color{red}5.252\times 10^1$ &$5.255\times 10^1$\\
		\cline{5-8}
		& & & & 250 &$5.252\times 10^1$ &$\color{blue}5.248\times 10^1$ &$\color{red}5.244\times 10^1$ \\
		\cline{5-8}
		& & & & 1000&$5.243\times 10^1$ &$\color{blue}5.240\times 10^1$ &$\color{red}5.238\times 10^1$ \\
		\hline
		\hline
		\multirow{3}{*}{\sf MITb} &\multirow{3}{*}{$808$}&\multirow{3}{*}{$827$}&\multirow{3}{*}{$6.115\times10^1$}&100&$\color{blue}6.347\times 10^1$ &$\color{red}6.228\times 10^1$ &$9.244\times 10^1$\\
		\cline{5-8}
		& & & & 250 &$\color{blue}6.220\times 10^1$ &$\color{red}6.153\times 10^1$ &$7.453\times 10^1$ \\
		\cline{5-8}
		& & & & 1000&$\color{blue}6.136\times 10^1$ &$\color{red}6.117\times 10^1$ &$6.887\times 10^1$ \\
		\hline
	\end{tabular}

	\caption{Results of the 2D SLAM Benchmark datasets with $10$ robots, in which the best and second results are colored in red and blue, respectively.}	\label{table::2Dcomparison}
	\vspace{1.5em}
	\centering
	\begin{tabular}{|c||c|c|c||c|c|c|c|c|}
		\hline			
		\multirow{2}{*}{Dataset}&\multirow{2}{*}{\# poses}&\multirow{2}{*}{\# edges}&\multirow{2}{*}{$f^*$} &\multirow{2}{*}{\# iterations} &\multicolumn{3}{c|}{$f$}\\
		\cline{6-8}
		&& &&  &{$\mm$ [ours]} &{$\amm$ [ours]} & {$\dgs$\cite{choudhary2017distributed}} \\
		\hline\hline
		\multirow{3}{*}{\sf sphere} &\multirow{3}{*}{$2500$}&\multirow{3}{*}{$4949$}&\multirow{3}{*}{$1.687\times 10^3$}&100&$1.691\times 10^3$ &$\color{red}1.687\times10^3$ &$\color{blue}1.688\times 10^3$\\
		\cline{5-8}
		& & & & 250 &$\color{blue}1.687\times 10^3$ &$\color{red}1.687\times 10^3$ &$1.688\times 10^3$ \\
		\cline{5-8}
		& & & & 1000&$\color{blue}1.687\times 10^3$ &$\color{red}1.687\times 10^3$ &$1.687\times 10^3$ \\
		\hline
		\hline
		\multirow{3}{*}{\sf torus} &\multirow{3}{*}{$5000$}&\multirow{3}{*}{$9048$}&\multirow{3}{*}{$2.423\times10^4$}&100&$\color{blue}2.424\times10^4$ &$\color{red}2.423\times10^4$ &$2.425\times 10^4$\\
		\cline{5-8}
		& & & & 250 &$\color{blue}2.423\times10^4$ &$\color{red}2.423\times10^4$ &$2.425\times 10^4$ \\
		\cline{5-8}
		& & & & 1000&$\color{blue}2.423\times10^4$ &$\color{red}2.423\times10^4$ &$2.424\times 10^4$ \\
		\hline
		\hline
		\multirow{3}{*}{\sf grid} &\multirow{3}{*}{$8000$}&\multirow{3}{*}{$22236$}&\multirow{3}{*}{$8.432\times10^4$}&100&$8.433\times10^4$ &$\color{red}8.432\times10^4$ &$\color{blue}8.433\times 10^4$\\
		\cline{5-8}
		& & & & 250 &$\color{blue}8.432\times10^4$ &$\color{red}8.432\times10^4$ &$8.433\times 10^4$ \\
		\cline{5-8}
		& & & & 1000&$\color{blue}8.432\times10^4$ &$\color{red}8.432\times10^4$ &$8.433\times 10^4$ \\
		\hline
		\hline
		\multirow{3}{*}{\sf garage} &\multirow{3}{*}{$1661$}&\multirow{3}{*}{$6275$}&\multirow{3}{*}{$1.263\times10^0$}&100&$\color{blue}1.279\times10^0$ &$\color{red}1.275\times10^0$ &$1.319\times 10^0$\\
		\cline{5-8}
		& & & & 250 &$\color{blue}1.274\times10^0$ &$\color{red}1.270\times10^0$ &$1.287\times 10^0$ \\
		\cline{5-8}
		& & & & 1000&$\color{blue}1.269\times10^0$ &$\color{red}1.266\times10^0$ &$1.273\times 10^0$ \\
		\hline
		\hline
		\multirow{3}{*}{\sf cubicle} &\multirow{3}{*}{$5750$}&\multirow{3}{*}{$16869$}&\multirow{3}{*}{$7.171\times10^2$}&100&$\color{blue}7.228\times10^2$ &$\color{red}7.204\times10^2$ &$7.317\times 10^2$\\
		\cline{5-8}
		& & & & 250 &$\color{blue}7.206\times10^2$ &$\color{red}7.189\times10^2$ &$7.231\times 10^2$ \\
		\cline{5-8}
		& & & & 1000&$\color{blue}7.185\times10^2$ &$\color{red}7.176\times10^2$ &$7.205\times 10^2$ \\
		\hline
		\hline
		\multirow{3}{*}{\sf rim} &\multirow{3}{*}{$10195$}&\multirow{3}{*}{$29743$}&\multirow{3}{*}{$5.461\times10^3$}&100&$\color{blue}5.779\times10^3$ &$\color{red}5.674\times10^3$ &$6.114\times 10^2$\\
		\cline{5-8}
		& & & & 250 &$\color{blue}5.695\times10^3$ &$\color{red}5.573\times10^3$ &$6.035\times 10^3$ \\
		\cline{5-8}
		& & & & 1000&$\color{blue}5.549\times10^3$ &$\color{red}5.486\times10^3$ &$5.932\times 10^3$ \\
		\hline
	\end{tabular}
	\caption{Results of the 3D SLAM Benchmark datasets with $10$ robots, in which the best and second results are colored in red and blue, respectively.}	\label{table::3Dcomparison}
	\vspace{-1.5em}
\end{table*}

\section{Conclusion}\label{section::conclusion}
In this paper, we have presented majorization minimization methods for distributed PGO that has important applications in multi-robot SLAM. We have proved that our proposed methods are guaranteed to converge to first-order critical points under mild conditions. Furthermore, we have accelerated majorization minimization methods for distributed PGO with Nesterov's method and there is no compromise of theoretical guarantees in acceleration. We have also presented majorization minimization methods for the distributed chordal initialization that converge quadratically. The efficacy of this work has been validated through applications on a number of 2D and 3D SLAM datasets and comparisons with existing state-of-the-art method \cite{choudhary2017distributed}, which indicates that our proposed methods converge faster and result in better solutions to distributed PGO.

\bibliographystyle{IEEEtran}
\bibliography{mybib}

\setcounter{secnumdepth}{2}
\setcounter{section}{0}
\numberwithin{equation}{section}
\renewcommand\thesection{Appendix \Alph{section}}
\section{Proof of \cref{prop::G}}\label{appendix::A}
\renewcommand\thesection{\Alph{section}}
\vspace{0.75em}
For notational simplicity, we introduce
\begin{equation}\label{eq::Ft}
F_{ij}^{t,\alpha\beta}(X)=\frac{1}{2}\|R_i^\alpha \nt_{ij}^{\alpha\beta}+t_i^\alpha -t_j^\beta\|^2,
\end{equation}
\begin{equation}\label{eq::FR}
F_{ij}^{R,\alpha\beta}(X)=\frac{1}{2}\|R_i^\alpha \nR_{ij}^{\alpha\beta} -R_j^\beta\|^2,
\end{equation}
\begin{multline}\label{eq::Et}
E_{ij}^{t,\alpha\beta}(X|X^{(k)})=\|R_i^{\alpha}\nt_{ij}^{\alpha\beta}+t_i^\alpha-\frac{1}{2}R_i^{\alpha(k)}\nt_{ij}^{\alpha\beta}-\frac{1}{2}t_i^{\alpha(k)}-\\
\frac{1}{2}t_j^{\beta(k)}\|^2+\|t_j^\beta-\frac{1}{2}R_i^{\alpha(k)}\nt_{ij}^{\alpha\beta}-\frac{1}{2}t_i^{\alpha(k)}-\frac{1}{2}t_j^{\beta(k)}\|^2,
\end{multline}
\begin{multline}\label{eq::ER}
E_{ij}^{R,\alpha\beta}(X|X^{(k)})=\|R_i^\alpha\nR_{ij}^{\alpha\beta}-\frac{1}{2}R_i^{\alpha(k)}\nR_{ij}^{\alpha\beta}-\frac{1}{2}R_j^{\beta(k)}\|^2+\\
\|R_j^\beta-\frac{1}{2}R_i^{\alpha(k)}\nR_{ij}^{\alpha\beta}-\frac{1}{2}R_j^{\beta(k)}\|^2,
\end{multline}
\begin{multline}\label{eq::Faa}
F^\alpha(X)=\sum_{(i,j)\in \aEE^{\alpha\alpha}}\frac{1}{2}\Big[\kappa_{ij}^{\alpha\alpha}\|R_i^\alpha \nR_{ij}^{\alpha\alpha} -R_j^\alpha\|^2 +\\ \tau_{ij}^{\alpha\alpha}\|R_i^\alpha \nt_{ij}^{\alpha\alpha}+t_i^\alpha - t_j^\alpha\|^2\Big],
\end{multline}
which will be used to prove \cref{prop::G}.

From \cref{eq::Ft,eq::FR}, it can be shown that
\begin{subequations}\label{eq::nabla_FR}
\begin{equation}
\nabla_{t_i^\alpha} F_{ij}^{t,\alpha\beta}(X) = R_i^\alpha\nt_{ij}^{\alpha\beta}+t_i^\alpha- t_j^\beta,
\end{equation}
\begin{equation}
\nabla_{R_i^\alpha} F_{ij}^{t,\alpha\beta}(X) =  \left(R_i^\alpha\nt_{ij}^{\alpha\beta}+t_i^\alpha- t_j^\beta\right)\nt{\vphantom{t}_{ij}^{\alpha\beta}}^\transpose,
\end{equation}
\begin{equation}
\nabla_{t_j^\beta} F_{ij}^{t,\alpha\beta}(X) = t_j^\beta-R_i^\alpha\nt_{ij}^{\alpha\beta}-t_i^\alpha.
\end{equation}
\end{subequations}
\begin{subequations}\label{eq::nabla_Ft}
	\begin{equation}
	\nabla_{R_i^\alpha} F_{ij}^{R,\alpha\beta}(X) = R_i^\alpha- R_j^\beta\nR{\vphantom{R}_{ij}^{\alpha\beta}}^\transpose,
	\end{equation}
	\begin{equation}
	\nabla_{R_j^\beta} F_{ij}^{R,\alpha\beta}(X) =  R_j^\beta-R_i^\alpha\nR_{ij}^{\alpha\beta},
	\end{equation}
\end{subequations}
Furthermore, since $F_{ij}^{R,\alpha\beta}(X)$ and $F_{ij}^{t,\alpha\beta}(X)$ are only related with $(t_i^\alpha,\,R_i^\alpha)$ and $(t_j^\beta,\,R_j^\beta)$, then $\nabla F_{ij}^{R,\alpha\beta}(X)$ and $\nabla F_{ij}^{t,\alpha\beta}(X)$ are well defined by \cref{eq::nabla_FR,eq::nabla_Ft}, respectively. 

From \cref{eq::ER,eq::Et,eq::nabla_FR,eq::nabla_Ft}, a tedious but straightforward mathematical manipulation indicates that 
\begin{equation}
\nonumber
\begin{aligned}
&E_{ij}^{t,\alpha\beta}(X|X^{(k)})\\
=&\|(R_i^\alpha -R_i^{\alpha(k)})\nt_{ij}^{\alpha\beta}+t_i^\alpha-t_i^{\alpha(k)} \|^2+\\
&\|t_j^\beta-t_j^{\beta(k)}\|^2+\\
&\innprod{\nabla_{t_i^\alpha} F_{ij}^{R,\alpha\beta}(X^{(k)})}{t_i^\alpha-t_j^{\alpha(k)}}+\\
&\innprod{\nabla_{R_i^\alpha} F_{ij}^{t,\alpha\beta}(X^{(k)})}{R_i^\alpha-R_i^{\alpha(k)}}+\\
&\innprod{\nabla_{t_j^\beta} F_{ij}^{t,\alpha\beta}(X^{(k)})}{t_j^\beta-t_j^{\beta(k)}}+\\
&F_{ij}^{t,\alpha\beta}(X^{(k)})\\
=&\|(R_i^\alpha -R_i^{\alpha(k)})\nt_{ij}^{\alpha\beta}+t_i^\alpha-t_i^{\alpha(k)} \|^2+\\
&\|t_j^\beta-t_j^{\beta(k)}\|^2+\\
&\innprod{\nabla F_{ij}^{t,\alpha\beta}(X^{(k)})}{X-X^{(k)}}+\\
&F_{ij}^{t,\alpha\beta}(X^{(k)})
\end{aligned}
\end{equation}
and
\begin{equation}
\nonumber
\begin{aligned}
&E_{ij}^{R,\alpha\beta}(X|X^{(k)})\\
=&\|R_i^\alpha -R_i^{\alpha(k)} \|^2+\|R_j^\beta-R_j^{\beta(k)}\|^2+\\
&\innprod{\nabla_{R_i^\alpha} F_{ij}^{R,\alpha\beta}(X^{(k)})}{R_i^\alpha-R_i^{\alpha(k)}}+\\
&\innprod{\nabla_{R_j^\beta} F_{ij}^{R,\alpha\beta}(X^{(k)})}{R_j^\beta-R_j^{\beta(k)}}+\\
&F_{ij}^{R,\alpha\beta}(X^{(k)})\\
=&\|R_i^\alpha -R_i^{\alpha(k)} \|^2+\|R_j^\beta-R_j^{\beta(k)}\|^2+\\
&\innprod{\nabla F_{ij}^{R,\alpha\beta}(X^{(k)})}{X-X^{(k)}}+\\
&F_{ij}^{R,\alpha\beta}(X^{(k)}),
\end{aligned}
\end{equation}
from which we obtain
\begin{subequations}\label{eq::Eobj}
\begin{equation}
E_{ij}^{t,\alpha\beta}(X^{(k)}|X^{(k)})=F_{ij}^{t,\alpha\beta}(X^{(k)}),
\end{equation}
\begin{equation}
E_{ij}^{R,\alpha\beta}(X^{(k)}|X^{(k)})=F_{ij}^{R,\alpha\beta}(X^{(k)}),
\end{equation}
\end{subequations}
\begin{subequations}\label{eq::Egrad}
	\begin{equation}
	\nabla E_{ij}^{t,\alpha\beta}(X^{(k)}|X^{(k)})=\nabla F_{ij}^{t,\alpha\beta}(X^{(k)}),
	\end{equation}
	\begin{equation}
	\nabla E_{ij}^{R,\alpha\beta}(X^{(k)}|X^{(k)})=\nabla F_{ij}^{R,\alpha\beta}(X^{(k)}),
	\end{equation}
\end{subequations}
As a result of \cref{eq::E,eq::obj} and \cref{eq::FR,eq::Ft,eq::Faa,eq::ER,eq::Et},  it can be concluded that
\begin{multline}\label{eq::Fsum}
F(X)=\sum_{\alpha\in \AA} F^\alpha(X) +\\
\sum_{\substack{\alpha,\beta\in \AA,\\\alpha\neq \beta}}\sum_{(i,j)\in \aEE^{\alpha\beta}}\Big[\kappa_{ij}^{\alpha\beta}\cdot F_{ij}^{R,\alpha\beta}(X)+
\tau_{ij}^{\alpha\beta}\cdot F_{ij}^{t,\alpha\beta}(X)\Big]
\end{multline}
and
\begin{multline}\label{eq::Esum}
E(X|X^{(k)})=\sum_{\alpha\in \AA} F^\alpha(X) +\\
\sum_{\substack{\alpha,\beta\in \AA,\\\alpha\neq \beta}}\sum_{(i,j)\in \aEE^{\alpha\beta}}\Big[\kappa_{ij}^{\alpha\beta}\cdot E_{ij}^{R,\alpha\beta}(X|X^{(k)})+\\
\tau_{ij}^{\alpha\beta}\cdot E_{ij}^{t,\alpha\beta}(X|X^{(k)})\Big].
\end{multline}
Applying \cref{eq::Egrad,eq::Eobj} to \cref{eq::Fsum,eq::Esum}, we obtain
\begin{equation}\label{eq::EF}
E(X^{(k)}|X^{(k)})=F(X^{(k)})
\end{equation}
and
\begin{equation}\label{eq::gEF}
\nabla E(X^{(k)}|X^{(k)})=\nabla F(X^{(k)}).
\end{equation}
It can be seen from \cref{eq::E} that $E(X|X^{(k)})$ is a sum of squares, which suggests that it is equivalent to
\begin{multline}\label{eq::Eprox}
E(X|X^{(k)})=
\frac{1}{2}\innprod{\nH(X-X^{(k)})}{X-X^{(k)}}+\\
\innprod{\nabla E(X^{(k)}|X^{(k)})}{X-X^{(k)}}+
E(X^{(k)}|X^{(k)}),
\end{multline}
in which $\nH=\nabla^2 E(X^{(k)}|X^{(k)})\in \R^{(d+1)n\times (d+1)n}$ is the Euclidean Hessian of $E(X|X^{(k)})$ at $X^{(k)}$.
Substituting \cref{eq::EF,eq::gEF} into \cref{eq::Eprox}, we obtain \cref{eq::EM}
\begin{multline}
\nonumber
E(X|X^{(k)})=
\frac{1}{2}\innprod{\nH(X-X^{(k)})}{X-X^{(k)}}+\\
\innprod{\nabla F(X^{(k)})}{X-X^{(k)}}+
F(X^{(k)}).
\end{multline}
From \cref{eq::Ea,eq::Easum}, it can be shown that $\nH=\nabla^2 E(X^{(k)}|X^{(k)})$ is a block diagonal matrix
\begin{equation}
\nonumber
\nH\triangleq\mathrm{diag}\big\{\nH^1,\,\cdots,\,\nH^A\big\}\in \R^{(d+1)n\times (d+1)n}
\end{equation}
in which $\nH^\alpha=\nabla^2 E^\alpha(X^{\alpha(k)}|X^{(k)})\in \R^{(d+1)n_\alpha\times (d+1)n_\alpha}$ is Euclidean Hessian of $E^\alpha(X^{\alpha}|X^{(k)})$ at $X^{\alpha (k)}$.  More explicitly, if we let
$$\aEE_{i-}^{\alpha\beta}\triangleq \{(i,\,j)|(i,\,j)\in \aEE^{\alpha\beta}\},$$
$$\aEE_{i+}^{\alpha\beta}\triangleq \{(i,\,j)|(j,\,i)\in \aEE^{\beta\alpha}\},$$
$$\NN_{i-}^{\alpha}\triangleq \{\beta\in \AA|\exists(i,\,j)\in \aEE^{\alpha\beta}\text{ and }\beta\neq\alpha\},$$
$$\NN_{i+}^{\alpha}\triangleq \{\beta\in \AA|\exists(j,\,i)\in \aEE^{\beta\alpha}\text{ and }\beta\neq\alpha\},$$ $$\EE_i^{\alpha\beta}\triangleq\aEE_{i-}^{\alpha\beta}\cup\aEE_{i+}^{\alpha\beta},$$
$$\NN_i^\alpha\triangleq\NN_{i-}^\alpha\cup\NN_{i+}^\alpha, $$  
and $\kappa_{ji}^{\beta\alpha}=\kappa_{ij}^{\alpha\beta}$ and $\tau_{ji}^{\beta\alpha}=\tau_{ij}^{\alpha\beta}$, it is straightforward to show that $\nH^\alpha\in \R^{(d+1)n_\alpha\times (d+1)n_\alpha}$ is a sparse matrix 
\begin{equation}
\nH^\alpha=\begin{bmatrix}
\nH^{\tau,\alpha}\hphantom{{}^\transpose} & \nH^{\nu,\alpha}\\
\nH{\vphantom{H}^{\nu,\alpha}}^\transpose & \nH^{\kappa,\alpha}
\end{bmatrix}
\end{equation}
and $\nH^{\tau,\alpha}\in \R^{n_\alpha\times n_\alpha}$, $\nH^{\nu,\alpha}\in\R^{n_\alpha\times dn_\alpha}$ and $\nH^{\kappa,\alpha}\in\R^{dn_\alpha\times dn_\alpha}$ are defined as
\allowdisplaybreaks
\begin{align}
&\big[\nH^{\tau,\alpha}\big]_{ij}=\begin{cases}
\sum\limits_{e\in \EE_i^{\alpha\alpha}}\tau_{e}^{\alpha\alpha}+\\
\quad\quad\;\;\sum\limits_{\substack{\beta\in \NN_i^\alpha}}\sum\limits_{e\in \EE_i^{\alpha\beta}}2\tau_e^{\alpha\beta}, & i=j\\
-\tau_{ij}^{\alpha\alpha}, & (i,j)\in \aEE^{\alpha\alpha},\\
-\tau_{ji}^{\alpha\alpha}, & (j,i)\in \aEE^{\alpha\alpha},\\
0,& \text{otherwise,}
\end{cases}\\[0.8em]
&\big[\nH^{\nu,\alpha}\big]_{ij}=\begin{cases}
\sum\limits_{e\in \EE_{i-}^{\alpha\alpha}}\tau_{e}^{\alpha\alpha}\nt{_{e}^{\alpha\alpha}}^\transpose+\\
\;\sum\limits_{\substack{\beta\in \NN_{i-}^\alpha}}\sum\limits_{e\in \EE_{i-}^{\alpha\beta}}2\tau_e^{\alpha\beta}\nt{\vphantom{t}_{e}^{\alpha\beta}}^\transpose, & i=j\\
-\tau_{ji}^{\alpha\alpha}\nt{_{ji}^{\alpha\alpha}}^\transpose, & (j,i)\in \aEE^{\alpha\alpha},\\
0,& \text{otherwise,}
\end{cases}\\[0.8em]
&\big[\nH^{\kappa,\alpha}\big]_{ij}=\begin{cases}
\sum\limits_{e\in \EE_i^{\alpha\alpha}}\kappa_{e}^{\alpha\alpha}\cdot\I+\\
\quad\sum\limits_{e\in \EE_{i-}^{\alpha\alpha}}\tau_{e}^{\alpha\alpha}\cdot\nt_{e}^{\alpha\alpha}\nt{\vphantom{t}_{e}^{\alpha\alpha}}^\transpose+\\
\;\sum\limits_{\substack{\beta\in \NN_i^\alpha}}2\big(\sum\limits_{e\in \EE_i^{\alpha\beta}}\kappa_e^{\alpha\beta}\cdot\I+\\
\quad\quad\sum\limits_{e\in \EE_{i-}^{\alpha\beta}}\;\tau_{e}^{\alpha\beta}\cdot\nt_{e}^{\alpha\beta}\nt{\vphantom{t}_{e}^{\alpha\beta}}^\transpose\big), & i=j\\
-\kappa_{ij}^{\alpha\alpha}\cdot\nR_{ij}^{\alpha\alpha}, & (i,j)\in \aEE^{\alpha\alpha},\\
-\kappa_{ji}^{\alpha\alpha}\cdot\nR{\vphantom{R}_{ij}^{\alpha\alpha}}^\transpose, & (j,i)\in \aEE^{\alpha\alpha},\\
0,& \text{otherwise,}
\end{cases}
\end{align}
in which $\big[\nH^{\tau,\alpha}\big]_{ij}\in \R$, $\big[\nH^{\nu,\alpha}\big]_{ij}\in \R^{1\times d}$ and $\big[\nH^{\kappa,\alpha}\big]_{ij}\in \R^{d\times d}$.

From \cref{eq::obj,eq::inequality,eq::E}, it can be concluded that $E(X|X^{(k)})\geq F(X)$ for any $X\in \R^{d\times (d+1)n}$ in which ``$=$'' holds if $X=X^{(k)}$. Furthermore, as a result of \cref{eq::objM}, it is possible to rewrite $F(X)$ as 
\begin{multline}\label{eq::F}
F(X)=
\frac{1}{2}\innprod{\nM(X-X^{(k)})}{X-X^{(k)}}+\\
\innprod{\nabla F(X^{(k)})}{X-X^{(k)}}+
F(X^{(k)}),
\end{multline}
and Eqs. \eqref{eq::EM} and \eqref{eq::F} and $E(X|X^{(k)})\geq F(X)$ suggest $\nH\succeq \nM$, which completes the proof.

\renewcommand\thesection{Appendix \Alph{section}}
\section{Proof of \cref{prop::mm}}\label{appendix::B}
\renewcommand\thesection{\Alph{section}}

\vspace{0.75em}

\noindent\textbf{Proof of \ref{prop::mm1}.\;} From \cref{prop::G} and \cref{eq::G,eq::Gammaxi,eq::EM}, we obtain 
\begin{equation}\label{eq::inG}
G(X|X^{(k)})\geq E(X|X^{(k)}) \geq F(X),
\end{equation}
in which ``$=$'' holds if $X=X^{(k)}$. Furthermore, as a result of \cref{eq::inG} and \cref{assumption::local}, it can be concluded that
\begin{multline}\label{eq::order}
F(X^{(k+1)}) \leq G(X^{(k+1)}|X^{(k)}) \leq\\
G(X^{(k)}|X^{(k)}) =  F(X^{(k)}),
\end{multline}
which suggests that $F(X^{(k)})$ is non-increasing. The proof is completed.

\vspace{1.2em}

\noindent\textbf{Proof of \ref{prop::mm2}.\;}  From \ref{prop::mm1} of \cref{prop::mm}, it has been proved that $F(X^{(k)})$ is non-increasing. Moreover, from \cref{eq::obj}, $F(X^{(k)})\geq 0$, i.e., $F(X^{(k)})$ is bounded below. As a result, there exists $F^{\infty} \in \R $ such that $F(X^{(k)})\rightarrow F^{\infty}$. The proof is completed.

\vspace{1.2em}

\noindent\textbf{Proof of \ref{prop::mm3}.\;} From \cref{eq::order}, we obtain
\begin{multline}\label{eq::bound}
F(X^{(k)})- F(X^{(k+1)}) \geq \\
G(X^{(k+1)}|X^{(k)}) - F(X^{(k+1)})
\end{multline}
From Eqs. \eqref{eq::G} and \eqref{eq::F}, we obtain
\begin{multline}\label{eq::GF}
G(X^{(k+1)}|X^{(k)}) - F(X^{(k+1)}) =\\
 \frac{1}{2}\innprod{ (\nGamma-\nM)(X^{(k+1)}-X^{(k)})}{X^{(k+1)}-X^{(k)}}.
\end{multline}
If $\nGamma \succ \nM$, there exists $\delta > 0$ such that $\nGamma \succeq \nM + \delta\cdot \I$, and as a result of \cref{eq::GF}, we obtain
\begin{multline}\label{eq::GFg}
G(X^{(k+1)}|X^{(k)}) - F(X^{(k+1)}) \geq\\
\frac{\delta}{2}\|X^{(k+1)}-X^{(k)}\|^2.
\end{multline}
From \cref{eq::bound,eq::GFg}, it can be concluded that
\begin{equation}
\nonumber
F(X^{(k)}) - F(X^{(k+1)}) \geq
\frac{\delta}{2}\|X^{(k+1)}-X^{(k)}\|^2,
\end{equation}
and since $F(X^{(k+1)})\leq F(X^{(k)})$, we further obtain
\begin{equation}\label{eq::dF}
\left|F(X^{(k)}) - F(X^{(k+1)})\right| \geq
\frac{\delta}{2}\|X^{(k+1)}-X^{(k)}\|^2.
\end{equation}
From \ref{prop::mm2} of \cref{prop::mm}, we obtain
\begin{equation}\label{eq::dF0}
\left|F(X^{(k)}) - F(X^{(k+1)})\right|\rightarrow 0,
\end{equation}
and \cref{eq::dF,eq::dF0} suggest
\begin{equation}
\|X^{(k+1)}-X^{(k)}\| \rightarrow 0.
\end{equation}

\vspace{1.2em}

\noindent\textbf{Proof of \ref{prop::mm4}.\;} It is straightforward to show that the Riemannian gradient $\grad\, F(X)$ takes the form as
\begin{equation}
\nonumber
\grad\, F(X) =\begin{bmatrix}
\grad_1 F(X) & \cdots  & \grad_A F(X)
\end{bmatrix}\in T_X \XX,
\end{equation}
in which
\begin{equation}
\nonumber
\grad_\alpha F(X)=
\begin{bmatrix}
\grad_{t^\alpha} F(X) & \grad_{R^\alpha} F(X)
\end{bmatrix}\in
T_{X^\alpha}\XX^\alpha.
\end{equation}
and $$T_{X^\alpha}\XX^\alpha\triangleq\R^{d\times n_\alpha}\times T_{R^\alpha} SO(d)^{n_\alpha}.$$
From \cite{absil2009optimization,rosen2016se}, it can be further shown that $\grad_{t^\alpha} F(X)$ and $\grad_{R^\alpha} F(X)$ are
\begin{equation}\label{eq::grad_t}
\grad_{t^\alpha} F(X) = \nabla_{t^\alpha} F(X)
\end{equation}
and
\begin{multline}\label{eq::grad_R}
\grad_{R^\alpha} F(X) = \nabla_{R^\alpha} F(X)-\\
R^\alpha\, \mathrm{SymBlockDiag}_d^\alpha(\nabla_{R^\alpha} F(X)).
\end{multline}
In \cref{eq::grad_R}, $\mathrm{SymBlockDiag}_d^\alpha: \R^{dn_\alpha\times dn_\alpha}\rightarrow \R^{dn_\alpha\times dn_\alpha}$ is a linear operator
\begin{equation}\label{eq::sym}
\mathrm{SymBlockDiag}_d^\alpha(Z)\triangleq\frac{1}{2}\mathrm{BlockDiag}_d^\alpha(Z+Z^\transpose),
\end{equation} 
in which $\mathrm{BlockDiag}_d^\alpha: \R^{dn_\alpha \times dn_\alpha}\rightarrow \R^{dn_\alpha \times dn_\alpha}$ extracts the $d\times d$-block diagonals of a matrix, i.e.,
\begin{equation}
\nonumber
\mathrm{BlockDiag}_d^\alpha(Z)\triangleq\begin{bmatrix}
Z_{11} & & \\
&\ddots &\\
& & Z_{n_\alpha n_\alpha}
\end{bmatrix}\in \R^{dn_\alpha \times dn_\alpha}.
\end{equation}
As a result of \cref{eq::grad_t,eq::grad_R,eq::sym}, there exists a linear operator $$\QQ_X:\R^{d(n+1)\times d(n+1)}\rightarrow T_X\XX$$ that depends on $X\in \XX$ such that
\begin{equation}\label{eq::gradF}
\grad\, F(X)=\QQ_X(\nabla F(X)).
\end{equation}
From \cref{eq::G} and $\nabla F(X)=X\nM$, it is straightforward to show that
\begin{equation}\label{eq::gradG}
\begin{aligned}
&\nabla G(X^{(k+1)}|X^{(k)})\\
=&\nabla F(X^{(k)})+(X^{(k+1)}-X^{(k)})\nGamma\\
=&\nabla F(X^{(k+1)})+(X^{(k+1)}-X^{(k)})(\nGamma-\nM).
\end{aligned}
\end{equation}
It should be noted that \cref{eq::gradF} applies to any functions on $\XX$, and as a result of \cref{eq::gradF,eq::gradG}, we obtain 
\begin{multline}\label{eq::gradGX}
\grad\,G(X^{(k+1)}|X^{(k)})=\grad\, F(X^{(k+1)}) + \\
\QQ_{X^{(k+1)}}\left((X^{(k+1)}-X^{(k)})(\nGamma-\nM)\right).
\end{multline}
From \cref{assumption::local}, it is assumed that 
$$\grad\, G^\alpha(X^{\alpha(k+1)}|X^{(k)})=\0,$$
and as a result of \cref{eq::Gsub,eq::Ga}, it is by definition that
\begin{multline}
\nonumber
\grad\, G(X|X^{(k)})=\\
\begin{bmatrix}
\grad\, G^1(X^1|X^{1(k)}) & \cdots & \grad\, G^A(X^A|X^{A(k)})
\end{bmatrix}
\end{multline}
which suggests
\begin{equation}\label{eq::gradG0}
\grad\,G(X^{(k+1)}|X^{(k)})=\0.
\end{equation}
From \cref{eq::gradG0,eq::gradGX}, we obtain
\begin{equation}
\nonumber
\grad\, F(X^{(k+1)})=\QQ_{X^{(k+1)}}\left((X^{(k)}-X^{(k+1)})(\nGamma-\nM)\right),
\end{equation}
and furthermore,
\begin{equation}\label{eq::gradGF}
\begin{aligned}
&\|\grad\, F(X^{(k+1)})\|\\ 
=&\|\QQ_{X^{(k+1)}}\left((X^{(k+1)}-X^{(k)})(\nGamma-\nM)\right)\|\\
\leq& \|\QQ_{X^{(k+1)}}\|_2\cdot \|\nGamma-\nM\|_2\cdot\|X^{(k)}-X^{(k+1)}\|,
\end{aligned}
\end{equation}
in which $\|\QQ_{X^{(k+1)}}\|_2$ and $\|\nGamma-\nM\|_2$ are the induced $2$-norms of linear operators $\QQ_{X^{(k+1)}}(\cdot)$ and $\nGamma-\nM$, respectively. From \cref{eq::grad_R,eq::grad_t}, it can be concluded that $\QQ_X(\cdot)$ is only related with the rotation $R^1,\,\cdots,\,R^A$ in which $R^\alpha\in SO(d)^{n_\alpha}$, and since $\QQ_X(\cdot)$ depends on $R^1,\,\cdots,\,R^A$ continuously and $SO(d)^{n_\alpha}$ is a compact manifold, $\|\QQ_X\|_2$ is bounded. Furthermore, $\|\nGamma-\nM\|_2$ is also bounded and we have proved that $\|X^{(k+1)}-X^{(k)}\|\rightarrow 0$ if $\nGamma\succ\nM$ in \ref{prop::mm3} of \cref{prop::mm}, and as a result of \cref{eq::gradGF}, we obtain
\begin{equation}\label{eq::G_F}
\|\grad\, F(X^{(k+1)})\|\rightarrow 0.
\end{equation}
if $\nGamma\succ\nM$. From \cref{eq::G_F}, it can be concluded that
\begin{equation}
\nonumber
\grad\, F(X^{(k+1)})\rightarrow \0
\end{equation}
if $\nGamma\succ\nM$. The proof is completed.

\vspace{1.2em}
\noindent\textbf{Proof of \ref{prop::mm5} and \ref{prop::mm6}.\;} From \cref{eq::Gamma,eq::Gammaxi}, we obtain
\begin{equation}
\nonumber
\nGamma=\nH + \xi\cdot\I.
\end{equation}
From \cref{prop::G}, it is known that $\nH\succeq \nM$, and as a result, if $\xi>0$, we obtain
\begin{equation}
\nonumber
\nGamma\succ \nM.
\end{equation}
From \ref{prop::mm3} and \ref{prop::mm4} of \cref{prop::mm}, we obtain $$\|X^{(k+1)}-X^{(k)}\|\rightarrow 0$$
and 
$$\grad\,F(X^{(k)})\rightarrow \0, $$
respectively. The proof is completed.

\renewcommand\thesection{Appendix \Alph{section}}
\section{Proof of \cref{prop::amm}}\label{appendix::C}
\renewcommand\thesection{\Alph{section}}

\vspace{0.75em}
\noindent\textbf{Proof of \ref{prop::amm0}.\;} 
From lines~\ref{line::restart1} to \ref{line::restart2} of \cref{algorithm::amm} and \cref{assumption::local}, it can be concluded that the $\amm$ method has either
\begin{equation}
\nonumber
G^\alpha(X^{\alpha(k+1)}|X^{(k)})\leq \overline{G}^{\alpha (k)}
\end{equation}
or
\begin{equation}
\nonumber
G^\alpha(X^{\alpha(k+1)}|X^{(k)})\leq 
G^\alpha(X^{\alpha(k)}|X^{(k)})=\overline{G}^{\alpha(k)},
\end{equation}
which suggests that
\begin{equation}\label{eq::Gbk}
G^\alpha(X^{\alpha(k+1)}|X^{(k)})\leq \overline{G}^{\alpha (k)}.
\end{equation}
From \cref{eq::Gsub,eq::Ga,eq::Fa}, we obtain
\begin{equation}\label{eq::Gxk}
G(X^{(k+1)}|X^{(k)})=\sum\limits_{\alpha\in \AA} G^{\alpha}(X^{\alpha(k+1)}|X^{(k)})
\end{equation} 
and 
\begin{equation}\label{eq::Fk}
F(X^{(k)})=\sum\limits_{\alpha\in \AA} \overline{G}^{\alpha(k)}.
\end{equation}
From \cref{eq::Gxk,eq::Fk,eq::Gbk}, we obtain
\begin{equation}\label{eq::GFxk}
G(X^{(k+1)}|X^{(k)})\leq F(X^{(k)}).
\end{equation}
From \cref{eq::inG,eq::GFxk}, we obtain
\begin{equation}\label{eq::fcf}
F(X^{(k+1)})\leq G(X^{(k+1)}|X^{(k)})\leq F(X^{(k)}).
\end{equation}
The proof is completed.

\vspace{1.2em}
\noindent\textbf{Proof of \ref{prop::amm1}.\;} From \ref{prop::amm0} of \cref{prop::amm}, it has been proved that $F(X^{(k)})$ is non-increasing. Moreover, from \cref{eq::obj}, $F(X^{(k)})\geq 0$, i.e., $F(X^{(k)})$ is bounded below. As a result, there exists $F^{\infty} \in \R $ such that $F(X^{(k)})\rightarrow F^{\infty}$. The proof is completed.

\vspace{1.2em}

\noindent\textbf{Proof of \ref{prop::amm2}.\;} Similar to the proof of \ref{prop::mm3} of \cref{prop::mm}, from \cref{eq::fcf}, we obtain
\begin{multline}\label{eq::dGk}
F(X^{(k)})- F(X^{(k+1)}) \geq \\
G(X^{(k+1)}|X^{(k)}) - F(X^{(k+1)})
\end{multline}
From Eqs. \eqref{eq::G} and \eqref{eq::F}, we obtain
\begin{multline}
\nonumber
G(X^{(k+1)}|X^{(k)}) - F(X^{(k+1)}) =\\
\frac{1}{2}\innprod{ (\nGamma-\nM)(X^{(k+1)}-X^{(k)})}{X^{(k+1)}-X^{(k)}}.
\end{multline}
If $\nGamma\succ \nM$, there exists $\delta > 0$ such that \cref{eq::GFg} holds, substituting which into the equation above, we obtain
\begin{equation}
\nonumber
G(X^{(k+1)}|X^{(k)}) - F(X^{(k+1)}) \geq
\frac{\delta}{2}\|X^{(k+1)}-X^{(k)}\|^2,
\end{equation}
which suggests
\begin{equation}
\nonumber
F(X^{(k)}) - F(X^{(k+1)}) \geq
\frac{\delta}{2}\|X^{(k+1)}-X^{(k)}\|^2,
\end{equation}
From \ref{prop::amm0} and \ref{prop::amm1} of \cref{prop::amm}, it can be concluded that $F(X^{(k)})-F(X^{(k+1)}) \geq 0$ and $F(X^{(k)})-F(X^{(k+1)}) \rightarrow 0$, and from the equation above, we obtain
\begin{equation}
\nonumber
\|X^{(k+1)}-X^{(k)}\|\rightarrow 0
\end{equation}
if $\nGamma\succ\nM$. The proof is completed.

\vspace{1.2em}

\noindent\textbf{Proof of \ref{prop::amm3}.\;} 
If $G^\alpha(Z^{\alpha(k+1)}|Y^{\alpha(k)})\leq\overline{G}^{\alpha(k)}$, then $X^{\alpha(k+1)}=Z^{\alpha(k+1)}$,
and following a similar procedure of \cref{eq::gradGX}, we obtain
\begin{multline}
\nonumber
\grad\, G^\alpha(X^{\alpha(k+1)}|Y^{\alpha(k)})=\grad_\alpha F(X^{(k+1)})+\\
\QQ_{X^{(k+1)}}^\alpha\left((X^{(k+1)}-Y^{(k)})(\nGamma-\nM)\right),
\end{multline}
in which $\QQ_{X}^\alpha: \R^{d\times dn}\rightarrow \R^{d\times dn_\alpha}$ is a linear operator such that $\QQ_{X}^\alpha(\cdot)$ extracts the $\alpha$-th block of $\QQ_{X}(\cdot)$. From \cref{assump::localZ}, the equation above is simplified to
\begin{multline}\label{eq::gradFz}
\grad_\alpha F(X^{(k+1)})=\\
\QQ_{X^{(k+1)}}^\alpha\left((Y^{(k)}-X^{(k+1)})(\nGamma-\nM)\right).
\end{multline}
From line~\ref{line::nesterov1} of \cref{algorithm::amm}, we obtain 
\begin{equation}\label{eq::Y}
Y^{(k)} = X^{(k)}+\left(X^{(k)}-X^{(k-1)}\right)\cdot \gamma^{(k)},
\end{equation}
in which
\begin{equation}
\nonumber
\gamma^{(k)}=\diag\{\gamma^{1(k)}\cdot\I^1,\,\cdots,\,\gamma^{A(k)}\cdot\I^A\}\in \R^{(d+1)n\times(d+1)n}
\end{equation}
and $\I^\alpha\in \R^{(d+1)n_\alpha\times (d+1)n_\alpha}$ is the identity matrix.
Substituting \cref{eq::Y} into \cref{eq::gradFz}, we obtain
\begin{multline}\label{eq::gradZY}
\!\!\!\!\!\!\grad_\alpha F(X^{(k+1)})=
\QQ_{X^{(k+1)}}^\alpha\!\left((X^{(k)}\!-\!X^{(k+1)})(\nGamma-\nM)\right)+\\
\QQ_{X^{(k+1)}}^\alpha\left((X^{(k)}-X^{(k-1)})\cdot\gamma^{(k)}\cdot(\nGamma-\nM)\right).
\end{multline}
If $G^\alpha(Z^{\alpha(k+1)}|Y^{\alpha(k)})>\overline{G}^{\alpha(k)}$, then
$$X^{\alpha(k+1)}\leftarrow\arg\min\limits_{X^\alpha\in\XX^\alpha }G^\alpha(X^\alpha|X^{(k)}),$$
and we obtain
\begin{multline}
\nonumber
\grad\, G^\alpha(X^{\alpha(k+1)}|X^{(k)})=\grad_\alpha F(X^{(k+1)})+\\
\QQ_{X^{(k+1)}}^\alpha\left((X^{(k+1)}-X^{(k)})(\nGamma-\nM)\right).
\end{multline}
From \cref{assumption::local}, the equation above is simplified to
\begin{multline}\label{eq::gradXX}
\grad_\alpha F(X^{(k+1)})=\\
\QQ_{X^{(k+1)}}^\alpha\left((X^{(k)}-X^{(k+1)})(\nGamma-\nM)\right).
\end{multline}
From \cref{eq::gradXX,eq::gradZY}, it can be concluded that
\begin{multline}
\nonumber
\|\grad_\alpha F(X^{(k+1)})\|\leq\\
\|\QQ_{X^{(k+1)}}^\alpha\left((X^{(k)}-X^{(k+1)})(\nGamma-\nM)\right)\|+\\
\|\QQ_{X^{(k+1)}}^\alpha\left((X^{(k)}-X^{(k-1)})\cdot\gamma^{(k)}\cdot(\nGamma-\nM)\right)\|,
\end{multline}
no matter whether $G^\alpha(Z^{\alpha(k+1)}|Y^{\alpha(k)})\leq\overline{G}^{\alpha(k)}$ or not, which suggests
\begin{multline}\label{eq::grada}
\|\grad_\alpha F(X^{(k+1)})\|\leq\\
\|\QQ_{X^{(k+1)}}^\alpha\|_2\cdot\|\nGamma-\nM\|_2\cdot
\Big(\|X^{(k+1)}-X^{(k)}\|+\\
\|\gamma^{(k)}\|_2\cdot\|X^{(k)}-X^{(k-1)}\|\Big),
\end{multline}
in which $\|\QQ_{X^{(k+1)}}^\alpha\|_2$, $\|\nGamma-\nM\|_2$ and $\|\gamma^{(k)}\|$ are induced $2$-norms. From line~\ref{line::s} of \cref{algorithm::amm}, we obtain $s^{\alpha(k)}\geq1$ and
\begin{equation}\label{eq::gammaa}
\gamma^{\alpha(k)}=\frac{\sqrt{4{s^{\alpha(k)}}^2+1}-1}{2s^{\alpha(k)}}=\frac{2s^{\alpha(k)}}{\sqrt{4{s^{\alpha(k)}}^2+1}+1}\in(0,\,1),
\end{equation}
which suggests $\|\gamma^{(k)}\|_2\in(0,\,1)$. From \cref{eq::grada,eq::gammaa}, it can be concluded that
\begin{multline}\label{eq::gradb}
\|\grad_\alpha F(X^{(k+1)})\|\leq\\
\|\QQ_{X^{(k+1)}}^\alpha\|_2\cdot\|\nGamma-\nM\|_2\cdot
\Big(\|X^{(k+1)}-X^{(k)}\|+\\
\|X^{(k)}-X^{(k-1)}\|\Big),
\end{multline}
Following a similar procedure to the proof \ref{prop::mm4} of \cref{prop::mm}, it can be shown that $\|\QQ_{X^{(k+1)}}^\alpha\|_2$ and $\|\nGamma-\nM\|_2$ are bounded as well. Furthermore, as a result of \ref{prop::amm2} of \cref{prop::amm}, if $\nGamma\succ\nM$, we obtain
\begin{equation}\label{eq::dX1}
\|X^{\alpha(k+1)}-X^{\alpha(k)}\|\rightarrow 0
\end{equation}
and
\begin{equation}\label{eq::dX2}
\|X^{\alpha(k)}-X^{\alpha(k-1)}\|\rightarrow 0.
\end{equation}
Substituting \cref{eq::dX1,eq::dX2} into \cref{eq::gradb} and applying that $\|\nGamma-\nM\|_2$, $\|\QQ_{X^{(k+1)}}^\alpha\|_2$ and $\|\gamma^{(k)}\|_2$ are bounded, we obtain
\begin{equation}
\|\grad_\alpha F(X^{(k+1)})\|\rightarrow 0,
\end{equation}
which suggests
\begin{equation}
\nonumber
\grad_\alpha F(X^{(k+1)})\rightarrow 0,
\end{equation}
and furthermore,
\begin{equation}
\nonumber
\grad\, F(X^{(k+1)})\rightarrow \0
\end{equation}
if $\nGamma\succ\nM$. The proof is completed.

\vspace{1.2em}

\noindent\textbf{Proof of \ref{prop::amm4} and \ref{prop::amm5}.\;} Similar to the proof of \ref{prop::mm5} and \ref{prop::mm6} of \cref{prop::mm}, we obtain $\nGamma\succ\nM$ if $\xi > 0$. From \ref{prop::amm2} and \ref{prop::amm3} of \cref{prop::amm}, we further obtain $\|X^{(k+1)}-X^{(k)}\|\rightarrow 0$ and $\grad\, F(X^{(k)})\rightarrow \0$, respectively, if $\xi>0$. The proof is completed.

\renewcommand\thesection{Appendix \Alph{section}}
\numberwithin{equation}{section}
\section{Proof of \cref{prop::chordal}}\label{appendix::D}
\renewcommand\thesection{\Alph{section}}
\vspace{0.75em}

From line~\ref{line::alg3::s} of \cref{algorithm::nag}, it can be seen that \cref{algorithm::nag} implements Nesterov's method \cite{nesterov1983method,nesterov2013introductory} to solve the chordal initialization of \cref{eq::chordal}. Moreover, since \cref{eq::chordal} is a convex optimization problem and Nesterov's method converges quadratically for convex optimization \cite{nesterov1983method,nesterov2013introductory}, it can be concluded that \cref{algorithm::nag}  converges to the global optimum to the distributed chordal initialization with a convergence rate of $O(1/k^2)$. The proof is completed.

\end{document}